\newtheorem{thm}{Theorem}[section]
\theoremstyle{definition}
\newtheorem{cor}[thm]{Corollary}
\newtheorem{prop}[thm]{Proposition}
\newtheorem{defn}[thm]{Definition}
\newtheorem{lem}[thm]{Lemma}
\newtheorem{rem}[thm]{Remark}
\newtheorem{ex}[thm]{Example}
\numberwithin{equation}{section}
\begin{document}
\title[2-absorbing and strongly 2-absorbing secondary submodules of modules]
{2-absorbing and strongly 2-absorbing secondary submodules of modules \footnote{This article was accepted for publication in Le Matematiche.}}

\author{H. Ansari-Toroghy}
\address {Department of pure Mathematics, Faculty of mathematical
Sciences, University of Guilan,
P. O. Box 41335-19141, Rasht, Iran.}%
\email{ansari@guilan.ac.ir}%

\author{F. Farshadifar}
\footnote {This research was in part supported by a grant from IPM (No. 94130048)}
\address{University of Farhangian, P. O. Box 19396-14464, Tehran, Iran.}
\email{f.farshadifar@gmail.com}

\address{School of Mathematics, Institute for Research in Fundamental Sciences (IPM), P.O. Box: 19395-5746, Tehran, Iran.}

\subjclass[2000]{13C13, 13C99}%
\keywords {Secondary, 2-absorbing secondary, strongly 2-absorbing secondary, and second radical.}
\begin{abstract}
In this paper, we will introduce the concept of 2-absorbing (resp. strongly 2-absorbing) secondary submodules of modules over a commutative ring as a generalization of secondary modules and investigate some basic properties of these classes of modules.
\end{abstract}
\maketitle
\section{Introduction}
\noindent

Throughout this paper, $R$ will denote a commutative ring with
identity and $\Bbb Z$ will denote the ring of integers.

Let $M$ be an $R$-module. A proper submodule $P$ of $M$ is said
 to be \emph{prime} if for any $r \in R$ and $m \in M$ with
$rm \in P$, we have $m \in P$ or $r \in (P:_RM)$ \cite{Da78}.
Let $N$ be a proper submodule of $M$. Then the \emph{ $M$-radical} of $N$, denoted by $M$-$rad(N)$, is defined to be the
intersection of all prime submodules of $M$ containing $N$. If $M$ has no prime
submodule containing $N$, then the $M$-radical of $N$ is defined to be $M$ \cite{MM86}. A non-zero submodule $S$ of $M$ is said to be \emph{second}
if for each $a \in R$, the homomorphism $ S \stackrel {a} \rightarrow S$
is either surjective or zero \cite{Y01}.
In this case $Ann_R(S)$ is a prime ideal of $R$.

The notion of 2- absorbing ideals as a generalization of prime ideals was introduced and studied  in \cite{Ba07}. A proper ideal $I$ of $R$ is a \emph{2-absorbing ideal}
of $R$ if whenever $a, b, c \in R$ and $abc \in I$, then $ab \in I$ or
 $ac \in I$ or $bc \in I$. It has been proved that $I$ is a 2-absorbing ideal of
 $R$ if and only if whenever $I_1$, $I_2$, and $I_3$ are ideals of $R$
with $I_1I_2I_3 \subseteq I$,
then $I_1I_2 \subseteq I$ or $I_1I_3 \subseteq I$ or $I_2I_3 \subseteq I$ \cite{Ba07}.
In \cite{BUY14}, the authors introduced the concept of 2-absorbing primary ideal which is a generalization of primary ideal. A proper ideal $I$ of $R$ is called a \emph{2-absorbing primary ideal} of $R$ if whenever $a, b, c \in R$ and $abc \in I$, then $ab\in I$ or $ac\in \sqrt{I}$ or  $bc\in \sqrt{I}$.


The notion of 2-absorbing ideals was extended to 2-absorbing submodules in \cite{YS11}.
A proper submodule $N$ of $M$ is called a \emph{2-absorbing submodule }of
 $M$ if whenever $abm \in N$
for some $a, b \in R$ and $m \in M$, then $am \in N$ or $bm \in N$ or
$ab \in (N :_R M)$.

 In \cite{AF16}, the  present authors introduced the dual notion of 2-absorbing submodules (that is, \emph{2-absorbing (resp. strongly 2-absorbing) second submodules}) of $M$ and investigated some properties of these classes of modules. A non-zero submodule $N$ of $M$ is said to be a \emph{2-absorbing second submodule of} $M$ if whenever
 $a, b \in R$, $L$ is a completely irreducible submodule of $M$,
and $abN\subseteq L$, then $aN\subseteq L$ or $bN\subseteq L$ or $ab \in Ann_R(N)$.
A non-zero submodule $N$ of $M$ is said to be a \emph{strongly 2-absorbing second submodule of} $M$ if whenever
 $a, b \in R$, $K$ is a submodule of $M$,
and $abN\subseteq K$, then $aN\subseteq K$ or $bN\subseteq K$ or $ab \in Ann_R(N)$.

In \cite{mtoa16}, the authors introduced the notion of 2-absorbing
primary submodules as a generalization of 2-absorbing primary ideals of rings and studied some properties of this class of modules.
A proper submodule $N$ of $M$ is said to be a \emph{2-absorbing primary submodule} of $M$ if whenever $a, b \in R$, $m \in M$, and $abm \in N$, then $am \in M$-$rad(N)$ or $bm \in M$-$rad(N)$  or $ab \in (N :_R M)$.

The purpose of this paper is to introduce the concepts of 2-absorbing and strongly 2-absorbing secondary submodules of an $R$-module $M$ as dual notion of 2-absorbing primary submodules and obtain some related results.

\section{Main results}
Let $M$ be an $R$-module. For a submodule $N$ of $M$ the the \emph{second radical} (or \emph{\emph{second socle}}) of $N$ is defined  as the sum of all second submodules of $M$ contained in $N$ and it is denoted by $sec(N)$ (or $soc(N)$). In case $N$ does not contain any second submodule, the second radical of $N$ is defined to be $(0)$. $N \not =0$ is said to be a \emph{second radical submodule of $M$} if $sec(N)=N$ (see \cite{CAS13} and \cite{AF11}).

 A proper submodule $N$ of $M$ is said to be \emph{completely irreducible} if $N=\bigcap _
{i \in I}N_i$, where $ \{ N_i \}_{i \in I}$ is a family of
submodules of $M$, implies that $N=N_i$ for some $i \in I$. It is
easy to see that every submodule of $M$ is an intersection of
completely irreducible submodules of $M$ \cite{FHo06}.

We frequently use the following basic fact without further comment.
\begin{rem}\label{r2.1}
Let $N$ and $K$ be two submodules of an $R$-module $M$. To prove $N\subseteq K$, it is enough to show that if $L$ is a completely irreducible submodule of $M$ such that $K\subseteq L$, then $N\subseteq L$.
\end{rem}

\begin{defn}\label{d9.1}
We say that a non-zero submodule $N$ of an $R$-module $M$ is a
\emph{2-absorbing secondary submodule} of $M$ if whenever $a, b \in R$, $L$ is a completely irreducible submodule of $M$ and $abN\subseteq L$,
then $a(sec(N)) \subseteq L$ or $b(sec(N)) \subseteq L$ or $ab \in Ann_R(N)$.
By a \emph{2-absorbing secondary module}, we mean a module which is a 2-absorbing secondary submodule of itself.
\end{defn}

\begin{ex}\label{e9.2}
Clearly, every submodule of the $\Bbb Z$-module $\Bbb Z$ is not secondary. But as $sec(\Bbb Z)=0$,  every submodule of the $\Bbb Z$-module $\Bbb Z$ is 2-absorbing secondary.
\end{ex}

\begin{lem}\label{l1.3}
Let $I$ be an ideal of $R$ and $N$ be a 2-absorbing secondary submodule of $M$.
If $a\in R$, $L$ is a completely irreducible submodule of $M$,
and $IaN \subseteq L$, then $a(sec(N)) \subseteq L$ or $I(sec(N)) \subseteq L$ or $Ia \in Ann_R(N)$.
\end{lem}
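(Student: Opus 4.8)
The plan is to argue by contradiction, dualizing the familiar proof that a $2$-absorbing ideal satisfies the ideal-theoretic form of the $2$-absorbing condition. Assume all three alternatives fail, that is, $a(sec(N))\not\subseteq L$, $I(sec(N))\not\subseteq L$, and $IaN\neq 0$, and derive a contradiction. Throughout I read $IaN$ as the submodule of $M$ generated by all products $xan$ with $x\in I$ and $n\in N$, so that $baN\subseteq IaN\subseteq L$ for every $b\in I$.

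The first step is to test the $2$-absorbing secondary property elementwise on $I$. Fix $b\in I$. From $baN\subseteq L$ and Definition \ref{d9.1} applied to the pair $a,b\in R$ and the completely irreducible submodule $L$, we obtain $a(sec(N))\subseteq L$ or $b(sec(N))\subseteq L$ or $ab\in Ann_R(N)$. Since $a(sec(N))\not\subseteq L$ by assumption, every $b\in I$ lies in $I_1:=I\cap (L:_R sec(N))$ or in $I_2:=I\cap(Ann_R(N):_R a)$. Both $I_1$ and $I_2$ are ideals of $R$ contained in $I$, and the previous sentence gives $I=I_1\cup I_2$.

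The second step is the elementary observation that an additive group which is the union of two subgroups must coincide with one of them: if there were $x\in I_1\setminus I_2$ and $y\in I_2\setminus I_1$, then $x+y\in I=I_1\cup I_2$ would force $y\in I_1$ or $x\in I_2$, a contradiction. Hence $I=I_1$ or $I=I_2$. In the first case $I(sec(N))\subseteq L$, and in the second $IaN=0$; either conclusion contradicts a standing assumption, which proves the lemma.

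I do not anticipate a genuine obstacle: the argument is a direct dualization of the ideal version of the $2$-absorbing condition, and the only points requiring care are the interpretation of the symbol $IaN$ (so that $baN\subseteq IaN$ is legitimate for $b\in I$) and the fact that $I_1$ and $I_2$ are honest ideals, so that the union-of-subgroups trick applies — only closure under addition and subtraction is actually used.
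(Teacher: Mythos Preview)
Your proof is correct and is essentially the paper's argument repackaged: the paper too observes that, once $a(sec(N))\not\subseteq L$, every $b\in I$ lies in $(L:_R sec(N))$ or in $(Ann_R(N):_R a)$, and then uses the $b+c$ trick---which is precisely your union-of-two-subgroups step, written out by hand---to force $I(sec(N))\subseteq L$.
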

\begin{proof}
Let $a(sec(N)) \not \subseteq L$ and $Ia \not \in Ann_R(N)$.
Then there exists $b \in I$ such that $abN \not = 0$. Now as $N$
is a 2-absorbing secondary submodule of $M$, $baN \subseteq L$ implies that $b(sec(N)) \subseteq L$. We show that $I(sec(N)) \subseteq L$. To see this, let $c$ be an arbitrary element of $I$.
Then $(b + c)aN \subseteq L$. Hence, either $(b + c)(sec(N)) \subseteq L$ or $(b + c)a \in Ann_R(N)$. If $(b+c)(sec(N)) \subseteq L$, then since $b(sec(N)) \subseteq L$ we have $c(sec(N)) \subseteq L$. If $(b+c)a \in Ann_R(N)$, then $ca \not \in Ann_R(N)$. Thus $caN \subseteq L$ implies that $c(sec(N)) \subseteq L$. Hence, we conclude that
$I(sec(N) \subseteq L$.
\end{proof}

\begin{thm}\label{l1.4}
Let $I$ and $J$ be two ideals of $R$ and $N$ be a 2-absorbing secondary submodule of an $R$-module  $M$.
If $L$ is a completely irreducible submodule of $M$ and $IJN \subseteq L$,
then $I(sec(N)) \subseteq L$ or $J(sec(N)) \subseteq L$ or $IJ \subseteq Ann_R(N)$.
\end{thm}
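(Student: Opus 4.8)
The plan is to deduce this two-ideal statement from the one-ideal Lemma~\ref{l1.3} in exactly the way Lemma~\ref{l1.3} itself was deduced from Definition~\ref{d9.1}. Assume toward a contradiction that none of the three conclusions holds, i.e.\ $I(sec(N))\not\subseteq L$, $J(sec(N))\not\subseteq L$, and $IJ\not\subseteq Ann_R(N)$; I will derive a contradiction.

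Fix an arbitrary $a\in I$. Since $a\in I$ we have $JaN=aJN\subseteq IJN\subseteq L$, so Lemma~\ref{l1.3}, applied with the ideal $J$ (in the role of ``$I$'') and this element $a$, yields that $a(sec(N))\subseteq L$, or $J(sec(N))\subseteq L$, or $Ja\subseteq Ann_R(N)$. The middle alternative is ruled out by our standing assumption, so every $a\in I$ satisfies $a(sec(N))\subseteq L$ or $JaN=0$; equivalently,
\[
I\ \subseteq\ \bigl(L:_R sec(N)\bigr)\ \cup\ Ann_R(JN),
\]
and both sets on the right-hand side are ideals of $R$.

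Now I invoke the elementary fact that an ideal contained in the union of two ideals is contained in one of them: if there were $x\in I\setminus(L:_R sec(N))$ and $y\in I\setminus Ann_R(JN)$, then necessarily $x\in Ann_R(JN)$ and $y\in(L:_R sec(N))$, and $x+y\in I$ would lie in neither ideal, a contradiction. Hence $I\subseteq(L:_R sec(N))$ or $I\subseteq Ann_R(JN)$. In the first case $I(sec(N))\subseteq L$; in the second $IJN=0$, that is $IJ\subseteq Ann_R(N)$. Either way we contradict our assumption, so one of the three asserted conclusions must hold.

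The proof is essentially formal once Lemma~\ref{l1.3} is in hand; the only points that need care are the bookkeeping in the application of Lemma~\ref{l1.3} (checking $JaN\subseteq L$ and tracking which object plays which role) and the invocation of the ``ideal inside a union of two ideals'' fact, which is what lets us pass from a pointwise dichotomy on elements of $I$ to a single global conclusion. A more computational alternative would imitate the additive-combination argument in the proof of Lemma~\ref{l1.3} directly with elements $a\in I$ and $b,c\in J$, but routing through Lemma~\ref{l1.3} is cleaner.
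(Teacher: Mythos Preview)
Your proof is correct. Both your argument and the paper's start the same way---fix $a\in I$, observe $JaN\subseteq L$, and invoke Lemma~\ref{l1.3} to obtain, for every $a\in I$, the dichotomy $a(sec(N))\subseteq L$ or $Ja\subseteq Ann_R(N)$ (once $J(sec(N))\subseteq L$ is excluded). The divergence is in how that pointwise dichotomy is globalized. You package it as the containment $I\subseteq (L:_R sec(N))\cup Ann_R(JN)$ and finish in one stroke with the classical fact that an ideal contained in a union of two ideals lies in one of them. The paper instead works elementwise: it fixes witnesses $a\in I\setminus(L:_R sec(N))$ and $b\in J\setminus(L:_R sec(N))$, takes arbitrary $c\in I$, $d\in J$, and analyzes $(a+c)(b+d)N\subseteq L$ via the defining property of $N$ to force $cd\in Ann_R(N)$. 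Your route is shorter and more structural, essentially replacing the additive-combination case analysis by a single appeal to a standard lemma; the paper's route is self-contained and mirrors the proof of Lemma~\ref{l1.3} itself. Either approach works without issue.
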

\begin{proof}
Let $I(sec(N)) \not \subseteq L$ and $J(sec(N)) \not \subseteq L$. We show that $IJ \subseteq Ann_R(N)$. Assume that $c \in I$ and $d\in J$. By assumption, there exists $a\in I$ such that
$a(sec(N)) \not \subseteq L$ but $aJN \subseteq L$. Now Lemma \ref{l1.3}
shows that $aJ \subseteq Ann_R(N)$ and so $(I\setminus (L:_Rsec(N)))J\subseteq Ann_R(N)$.
Similarly, there exists $b \in (J\setminus (L:_Rsec(N)))$ such that $Ib \subseteq Ann_R(N)$ and also $I(J\setminus (L:_Rsec(N)))\subseteq Ann_R(N)$. Thus we have $ab \in Ann_R(N)$,
$ad \in Ann_R(N)$, and $cb \in Ann_R(N)$. As $a + c \in I$
and $b + d \in J$, we have $(a + c)(b + d)N \subseteq L$.
Therefore, $(a + c)(sec(N)) \subseteq L$ or $(b + d)(sec(N)) \subseteq L$ or $(a + c)(b + d) \in Ann_R(N)$. If $(a + c)(sec(N)) \subseteq L$, then $c(sec(N)) \not \subseteq L$. Hence $c \in  I\setminus (L:_Rsec(N))$
which implies that $cd \in Ann_R(N)$. Similarly, if $(b + d)(sec(N)) \subseteq L$, we can deduce that $cd \in Ann_R(N)$. Finally if $(a+c)(b+d) \in Ann_R(N)$, then $ab+ad+cb+cd \in Ann_R(N)$ so that $cd \in Ann_R(N)$. Therefore, $IJ \subseteq Ann_R(N)$.
\end{proof}

\begin{thm}\label{t1.5}
Let $N$ be a non-zero submodule of an $R$-module $M$. The following statements
are equivalent:
\begin{itemize}
 \item [(a)]  If $abN\subseteq L_1 \cap L_2$ for some $a, b \in R$ and completely irreducible submodules $L_1, L_2$ of $M$, then $a(sec(N)) \subseteq L_1 \cap L_2$ or $b(sec(N)) \subseteq L_1 \cap L_2$ or $ab \in Ann_R(N)$;
 \item [(b)] If $IJN \subseteq K$ for some ideals $I, J$ of $R$ and a
submodule $K$ of $M$, then $I(sec(N)) \subseteq K$ or $J(sec(N)) \subseteq K$ or $IJ \in Ann_R(N)$;
\item [(c)] For each $a , b \in R$, we have $a(sec(N)) \subseteq abN$ or  $b(sec(N)) \subseteq abN$ or $abN=0$.
\end{itemize}
\end{thm}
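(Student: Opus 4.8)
The plan is to prove the cyclic chain (a)$\Rightarrow$(b)$\Rightarrow$(c)$\Rightarrow$(a), the only substantial implication being (a)$\Rightarrow$(b).

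First I would observe that (a), specialized to $L_1=L_2$, is exactly Definition \ref{d9.1}; so, assuming (a), the submodule $N$ is 2-absorbing secondary and Lemma \ref{l1.3} and Theorem \ref{l1.4} apply to it. Moreover, inspecting those two proofs, the only property of the completely irreducible submodule $L$ that they use is that $xyN\subseteq L$ forces $x(sec(N))\subseteq L$ or $y(sec(N))\subseteq L$ or $xy\in Ann_R(N)$; hypothesis (a) supplies exactly this with $L$ replaced by an intersection $L_1\cap L_2$ of two completely irreducible submodules. Hence the arguments of Lemma \ref{l1.3} and Theorem \ref{l1.4} carry over verbatim to yield, under (a): if $IJN\subseteq L_1\cap L_2$ for ideals $I,J$ of $R$, then $I(sec(N))\subseteq L_1\cap L_2$ or $J(sec(N))\subseteq L_1\cap L_2$ or $IJ\subseteq Ann_R(N)$. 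Call this strengthened statement $(\star)$.

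For (a)$\Rightarrow$(b), suppose $IJN\subseteq K$ for ideals $I,J$ and a submodule $K$, and assume $IJ\not\subseteq Ann_R(N)$; I must produce $I(sec(N))\subseteq K$ or $J(sec(N))\subseteq K$. Since every submodule of $M$ is an intersection of completely irreducible submodules, write $K=\bigcap_{i\in\Lambda}L_i$. For each $i$ we have $IJN\subseteq L_i$, so Theorem \ref{l1.4} (its third alternative being excluded) gives $I(sec(N))\subseteq L_i$ or $J(sec(N))\subseteq L_i$. If the first holds for every $i$ then $I(sec(N))\subseteq K$ and we are done; likewise if the second holds for every $i$. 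In the remaining case choose $i_1$ with $I(sec(N))\not\subseteq L_{i_1}$, which forces $J(sec(N))\subseteq L_{i_1}$, and $i_2$ with $J(sec(N))\not\subseteq L_{i_2}$, which forces $I(sec(N))\subseteq L_{i_2}$. Applying $(\star)$ to $IJN\subseteq L_{i_1}\cap L_{i_2}$, each of its three possible conclusions is impossible: $I(sec(N))\subseteq L_{i_1}\cap L_{i_2}$ contradicts $I(sec(N))\not\subseteq L_{i_1}$, $J(sec(N))\subseteq L_{i_1}\cap L_{i_2}$ contradicts $J(sec(N))\not\subseteq L_{i_2}$, and $IJ\subseteq Ann_R(N)$ contradicts our standing hypothesis. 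This eliminates the remaining case, so (b) follows. I expect this avoidance-style argument over the (possibly infinite) family $\{L_i\}_{i\in\Lambda}$, together with the verification that $(\star)$ is legitimate, to be the main obstacle; everything else is bookkeeping.

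Finally, (b)$\Rightarrow$(c) and (c)$\Rightarrow$(a) are quick. For (b)$\Rightarrow$(c), given $a,b\in R$ apply (b) with $I=Ra$, $J=Rb$, $K=abN$; since $(Ra)(Rb)N=abN$ the hypothesis of (b) holds, and its three conclusions read $a(sec(N))\subseteq abN$, $b(sec(N))\subseteq abN$, or $ab\in Ann_R(N)$ (that is, $abN=0$). For (c)$\Rightarrow$(a), if $abN\subseteq L_1\cap L_2$ then by (c) either $a(sec(N))\subseteq abN\subseteq L_1\cap L_2$, or $b(sec(N))\subseteq abN\subseteq L_1\cap L_2$, or $abN=0$ so $ab\in Ann_R(N)$; in every case (a) holds, in fact with much to spare. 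This completes the cycle.
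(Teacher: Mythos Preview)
Your proof is correct and follows essentially the same route as the paper: the paper also reduces (a)$\Rightarrow$(b) to an avoidance argument over the completely irreducible submodules containing $K$, using Theorem~\ref{l1.4} for single $L$'s and then invoking the two-term version for $L_1\cap L_2$ to derive a contradiction. Your write-up is in fact more careful than the paper's at one point: the paper asserts the ideal version of (a) for $L_1\cap L_2$ (your statement $(\star)$) without comment, whereas you correctly observe that it follows by rerunning the proofs of Lemma~\ref{l1.3} and Theorem~\ref{l1.4} with hypothesis (a) in place of Definition~\ref{d9.1}; the remaining organizational difference (your cycle (a)$\Rightarrow$(b)$\Rightarrow$(c)$\Rightarrow$(a) versus the paper's pair of equivalences) is cosmetic.
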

\begin{proof}
$(a) \Rightarrow (b)$. Assume that $IJN \subseteq K$ for some ideals $I, J$ of $R$, a submodule $K$ of $M$, and $IJ \not \subseteq Ann_R(N)$.
Then by Theorem \ref{l1.4}, for all completely irreducible submodules $L$ of $M$ with $K \subseteq L$ either $I(sec(N)) \subseteq L$ or $J(sec(N))\subseteq L$. If $I(sec(N)) \subseteq L$ (resp. $J(sec(N)) \subseteq L$) for all completely irreducible submodules $L$ of $M$ with $K \subseteq L$, we are done. Now suppose that $L_1$ and $L_2$
are two completely irreducible submodules of $M$ with $K \subseteq L_1$, $K \subseteq L_2$, $I(sec(N)) \not \subseteq L_1$, and $J(sec(N)) \not \subseteq L_2$.
Then $I(sec(N))  \subseteq L_2$
and $J(sec(N))  \subseteq L_1$. Since $IJN \subseteq L_1 \cap L_2$, we
have either $I(sec(N)) \subseteq L_1 \cap L_2$ or $J(sec(N)) \subseteq L_1 \cap L_2$.
If $I(sec(N)) \subseteq L_1 \cap L_2$, then $I(sec(N)) \subseteq L_1$ which is a
contradiction. Similarly from $J(sec(N)) \subseteq L_1 \cap L_2$ we get a contradiction.

$(b) \Rightarrow (a)$. This is clear

$(a) \Rightarrow (c)$. By part (a), $N \not =0$. Let $a, b \in R$. Then $abN \subseteq abN$ implies that $a(sec(N)) \subseteq abN$ or $b(sec(N)) \subseteq abN$ or $abN=0$.

$(c) \Rightarrow (a)$. This is clear.
\end{proof}

\begin{defn}\label{d9.1}
We say that a non-zero submodule $N$ of an $R$-module $M$ is a
\emph{strongly 2-absorbing secondary submodule} of $M$ if
satisfies the equivalent conditions of Theorem \ref{t1.5}.
By a \emph{strongly 2-absorbing secondary module}, we mean a module which is a strongly 2-absorbing secondary submodule of itself.
\end{defn}

Let $N$ be a submodule of an $R$-module $M$. Then part (d) of Theorem \ref{t1.5}   shows that $N$ is a strongly 2-absorbing secondary submodule of $M$ if and only if $N$ is a strongly 2-absorbing secondary module.

\begin{ex}\label{e9.1}
Clearly every strongly 2-absorbing secondary submodule is a 2-absorbing secondary submodule. But the converse is not true in general. For example, consider $M=\Bbb Z_6 \oplus \Bbb Q$ as a $\Bbb Z$-module. Then $M$ is a
2-absorbing secondary module. But since $0\not= 6M \subseteq 0 \oplus \Bbb Q$,  $sec(M)=M$, $2M\not \subseteq 0 \oplus \Bbb Q$, and $3M \not \subseteq 0 \oplus \Bbb Q$, $M$ is not a strongly 2-absorbing secondary module.
\end{ex}

\begin{prop}\label{l9.2}
Let $N$  be a 2-absorbing second submodule of an $R$-module $M$. Then $N$ is a strongly 2-absorbing secondary submodule  of $M$.
\end{prop}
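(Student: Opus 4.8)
The plan is to verify condition (a) of Theorem~\ref{t1.5} for $N$. Fix $a,b\in R$ and completely irreducible submodules $L_1,L_2$ of $M$ with $abN\subseteq L_1\cap L_2$; we may assume $ab\notin Ann_R(N)$, and the goal is to deduce $a(sec(N))\subseteq L_1\cap L_2$ or $b(sec(N))\subseteq L_1\cap L_2$. Since $abN\subseteq L_1$, $N$ is 2-absorbing second, and $ab\notin Ann_R(N)$, we get $aN\subseteq L_1$ or $bN\subseteq L_1$; likewise $aN\subseteq L_2$ or $bN\subseteq L_2$.

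First I would dispose of the ``aligned'' cases using the trivial inclusion $sec(N)\subseteq N$: if $aN$ lies in both $L_1$ and $L_2$ then $a(sec(N))\subseteq aN\subseteq L_1\cap L_2$, and symmetrically if $bN$ lies in both. This leaves only the ``crossed'' case, say $aN\subseteq L_1$ and $bN\subseteq L_2$ while $aN\not\subseteq L_2$ and $bN\not\subseteq L_1$, which is where the real content of the statement lies.

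For the crossed case the natural device is to argue one second submodule at a time. If $S$ is a second submodule of $M$ with $S\subseteq N$, then $abS\subseteq abN\subseteq L_1\cap L_2$; since $S$ is second, either $abS=S$, which forces $S\subseteq L_1\cap L_2$ and hence $aS,bS\subseteq L_1\cap L_2$, or $abS=0$, in which case the primeness of $Ann_R(S)$ gives $aS=0$ or $bS=0$. In every case $aS\subseteq L_1\cap L_2$ or $bS\subseteq L_1\cap L_2$. As $sec(N)$ is the sum of all such $S$, one is finished once the same alternative can be selected for every second submodule $S\subseteq N$ simultaneously.

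I expect this uniformity to be the main obstacle. A priori one second submodule $S_1\subseteq N$ could satisfy $aS_1=S_1\not\subseteq L_1\cap L_2$ and $bS_1=0$, while another $S_2\subseteq N$ satisfies $bS_2=S_2\not\subseteq L_1\cap L_2$ and $aS_2=0$, and then the per-$S$ information yields neither $a(sec(N))\subseteq L_1\cap L_2$ nor $b(sec(N))\subseteq L_1\cap L_2$. Ruling this configuration out is the crux: one would like to choose a completely irreducible submodule $L$ with $abN\subseteq L$ and $S_1\not\subseteq L$, apply the 2-absorbing second hypothesis to $N$ to force $bN\subseteq L$, and then contradict $bN\not\subseteq L_1$ (with the symmetric argument handling $S_2$). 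Whether this reconciliation goes through in full generality is the delicate point; the alternative route via condition (c) of Theorem~\ref{t1.5}, replacing $L_1\cap L_2$ by $abN$, runs into exactly the same difficulty.
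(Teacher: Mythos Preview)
Your proposal has a genuine gap---you say so yourself---and the gap stems from an unnecessary detour. You try to reach $a(sec(N))\subseteq L_1\cap L_2$ by decomposing $sec(N)$ into its second summands $S$ and arguing one $S$ at a time; this is where the uniformity problem appears, and you never resolve it. The paper's proof does not take this route at all. It simply uses the trivial containment $sec(N)\subseteq N$ globally: given $a,b\in R$ and an arbitrary submodule $K$ with $abN\subseteq K$, the hypothesis yields $aN\subseteq K$ or $bN\subseteq K$ or $abN=0$, whence $a(sec(N))\subseteq aN\subseteq K$ or $b(sec(N))\subseteq bN\subseteq K$ or $abN=0$. No per-$S$ analysis, no crossed case, two lines.

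That said, your difficulty is not imaginary. Notice that the paper's two-line argument applies the hypothesis to an \emph{arbitrary} submodule $K$, which is literally the defining property of a \emph{strongly} $2$-absorbing second submodule, not of a $2$-absorbing second submodule (the latter quantifies only over completely irreducible $L$). In effect, the paper's proof treats the hypothesis as the strong version; whether this is a slip in the statement, or an implicit appeal to an equivalence established in \cite{AF16}, it explains why you got stuck while the paper did not. If you really must work only from the completely-irreducible formulation, then the crossed case you isolated is exactly the obstruction, and resolving it would require importing whatever result from \cite{AF16} identifies the two notions; your per-second-submodule strategy will not close it on its own.
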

\begin{proof}
Let $a, b \in R$ and  $K$ be a submodule of $M$ such that $abN \subseteq K$. Then $aN \subseteq K$ or $bN \subseteq K$ or $ab N=0$ by assumption.
Thus $a(sec(N))\subseteq aN \subseteq K$ or $b(sec(N)) \subseteq aN \subseteq K$ or $abN=0$, as required.
\end{proof}

The following example shows that the converse of the Proposition \ref{l9.2} is not true in general.
\begin{ex}\label{ee9.3}
Let $M$ be the $\Bbb Z$-module $\Bbb Z_{p^\infty}$. Then as $p^2 \langle 1/p^3+\Bbb Z \rangle \subseteq \langle 1/p+\Bbb Z \rangle$,  $p \langle 1/p^3+\Bbb Z \rangle \not \subseteq \langle 1/p+\Bbb Z \rangle$, and $p^2 \langle 1/p^3+\Bbb Z \rangle \not =0$, we have the submodule  $\langle 1/p^3+\Bbb Z \rangle$ of $\Bbb Z_{p^\infty}$ is not 2-absorbing second submodule. But $sec(\langle 1/p^3+\Bbb Z \rangle)=\langle 1/p+\Bbb Z \rangle$ implies that  $\langle 1/p^3+\Bbb Z \rangle$ is a strongly 2-absorbing secondary submodule of $M$.
\end{ex}

An $R$-module $M$ is said to be a \emph{comultiplication module} if for every submodule $N$ of $M$ there exists an ideal $I$ of $R$ such that $N=(0:_MI)$, equivalently, for each submodule $N$ of $M$, we have $N=(0:_MAnn_R(N))$ \cite{AF07}.
\begin{thm}\label{t9.4}
Let $M$ be a finitely generated comultiplication $R$-module. If $N$
is a strongly 2-absorbing secondary submodule of $M$, then $Ann_R(N)$ is a 2-absorbing primary ideal of $R$.
\end{thm}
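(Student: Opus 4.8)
The plan is to verify directly the defining property of a 2-absorbing primary ideal for $I:=Ann_R(N)$: given $a,b,c\in R$ with $abc\in I$, I must show $ab\in I$, or $ac\in\sqrt I$, or $bc\in\sqrt I$. Note that $I$ is proper since $N\neq 0$, and that $N=(0:_M I)$ because $M$ is a comultiplication module. The computational heart of the argument will be the equality $Ann_R(sec(N))=\sqrt{Ann_R(N)}$. One inclusion, $\sqrt{Ann_R(N)}\subseteq Ann_R(sec(N))$, holds in any module: if $S\subseteq N$ is a second submodule then $Ann_R(S)$ is a prime ideal containing $Ann_R(N)$, hence it contains $\sqrt{Ann_R(N)}$, so $\sqrt{Ann_R(N)}$ annihilates every second submodule contained in $N$ and therefore annihilates their sum $sec(N)$. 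For the reverse inclusion I would argue contrapositively: given $r\notin\sqrt{Ann_R(N)}$, choose a minimal prime $\mathfrak p$ over $Ann_R(N)$ with $r\notin\mathfrak p$; since $M$ is a comultiplication module, $(0:_M\mathfrak p)\subseteq(0:_M Ann_R(N))=N$, and, using that $M$ is finitely generated and comultiplication, $(0:_M\mathfrak p)$ is a nonzero second submodule of $M$ with annihilator $\mathfrak p$; as $r\notin\mathfrak p$ we get $r\cdot(0:_M\mathfrak p)\neq 0$, hence $r\cdot sec(N)\neq 0$, i.e. $r\notin Ann_R(sec(N))$. (This description of the second radical in a comultiplication module is essentially the content of results in \cite{AF11, CAS13}, which one could simply cite.) In particular $sec(N)\neq 0$.

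With this preliminary in hand the main argument is short. Suppose $abc\in I$ but $ab\notin I$; then $abN\neq 0$. Since $N$ is a strongly 2-absorbing secondary submodule, part (c) of Theorem \ref{t1.5} applied to $a$ and $b$ gives, after discarding the option $abN=0$, that $a\cdot sec(N)\subseteq abN$ or $b\cdot sec(N)\subseteq abN$. Multiplying whichever inclusion holds by $c$ and using $abcN=0$ yields $ca\cdot sec(N)=0$ or $cb\cdot sec(N)=0$, that is, $ac\in Ann_R(sec(N))$ or $bc\in Ann_R(sec(N))$. By the preliminary equality, $ac\in\sqrt I$ or $bc\in\sqrt I$, which is exactly the 2-absorbing primary condition for $I=Ann_R(N)$.

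I expect the one genuinely non-routine point to be the claim that $Ann_R(sec(N))=\sqrt{Ann_R(N)}$ — concretely, that $(0:_M\mathfrak p)$ is a nonzero second submodule whenever $\mathfrak p$ is a prime containing $Ann_R(M)$ and $M$ is finitely generated comultiplication. This is where both hypotheses on $M$ are really used: the comultiplication property is what ties annihilators back to submodules so that $(0:_M\mathfrak p)\subseteq N$, while finite generation is what forces $(0:_M\mathfrak p)$ to be nonzero and second. Once that fact is available (either proved from scratch or quoted from the authors' earlier work), the passage through Theorem \ref{t1.5}(c) and the final bookkeeping are immediate.
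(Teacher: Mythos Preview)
Your proof is correct and, like the paper's, hinges on the identity $Ann_R(sec(N))=\sqrt{Ann_R(N)}$; the paper simply quotes this as \cite[2.12]{AF25}, whereas you also sketch an argument for it. The route through the strongly 2-absorbing secondary condition differs, however. The paper uses characterization~(b) of Theorem~\ref{t1.5}: assuming $ac\notin\sqrt{Ann_R(N)}$ and $bc\notin\sqrt{Ann_R(N)}$, it picks completely irreducible submodules $L_1,L_2$ witnessing $ac\,sec(N)\not\subseteq L_1$ and $bc\,sec(N)\not\subseteq L_2$, applies the defining property to $abN\subseteq(L_1\cap L_2:_Mc)$, and derives a contradiction in each case. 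You instead invoke characterization~(c) directly: from $abN\neq0$ you read off $a\,sec(N)\subseteq abN$ or $b\,sec(N)\subseteq abN$, multiply by $c$, and land immediately in $\sqrt{Ann_R(N)}$. Your path is shorter and avoids the detour through completely irreducible submodules; the paper's argument is equally valid but a bit more roundabout given that condition~(c) is already available.
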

\begin{proof}
Let $a, b, c \in R$ be such that $abc \in Ann_R(N)$, $ac \not \in  \sqrt{Ann_R(N)}$, and $bc \not \in  \sqrt{Ann_R(N)}$. Since by \cite[2.12]{AF25}, $Ann_R(sec(N))=\sqrt{Ann_R(N)}$, there exist completely irreducible submodules $L_1$ and $L_2$ of $M$ such that $ac(sec(N)) \not \subseteq L_1$ and $bc(sec(N)) \not \subseteq L_2$. But $abcN=0 \subseteq L_1 \cap L_2$ implies that $abN \subseteq (L_1 \cap L_2:_Mc)$. Now as $N$ is a strongly 2-absorbing secondary submodule of $M$, we have $a(sec(N)) \subseteq (L_1 \cap L_2:_Mc)$ or $b(sec(N)) \subseteq (L_1 \cap L_2:_Mc)$  or $abN=0$. If $a(sec(N)) \subseteq (L_1 \cap L_2:_Mc)$ (resp. $b(sec(N)) \subseteq (L_1 \cap L_2:_Mc)$), then $ac(sec(N)) \subseteq L_1$ (resp. $bc(sec(N))  \subseteq L_2$), a contradiction. Hence $abN=0$, as needed.
\end{proof}

\begin{thm}\label{t9.5}
Let $N$ be a submodule of a comultiplication $R$-module $M$. If $Ann_R(N)$ is a 2-absorbing primary ideal of $R$, then $N$ is a strongly 2-absorbing secondary submodule of $M$.
\end{thm}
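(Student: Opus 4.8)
The plan is to verify condition (c) of Theorem~\ref{t1.5}, which characterizes strongly $2$-absorbing secondary submodules. Write $\mathfrak a=Ann_R(N)$. Since every $2$-absorbing primary ideal is proper, $\mathfrak a\neq R$, so $N\neq 0$. First I would record a one-line general fact, needing no hypothesis on $M$: for each second submodule $S\subseteq N$ the ideal $Ann_R(S)$ is prime and contains $Ann_R(N)=\mathfrak a$, hence (being radical) contains $\sqrt{\mathfrak a}$; summing over all such $S$ yields $\sqrt{\mathfrak a}\cdot sec(N)=0$, i.e.\ $\sqrt{\mathfrak a}\subseteq Ann_R(sec(N))$ (if $N$ contains no second submodule this is trivial, as then $sec(N)=0$).

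Next I would use the comultiplication hypothesis only through the identity $abN=(0:_M(\mathfrak a:_R ab))$, which holds because $M$ is a comultiplication module and $Ann_R(abN)=(\mathfrak a:_R ab)$. Fix $a,b\in R$; if $abN=0$ we are done, so assume $abN\neq 0$, i.e.\ $ab\notin\mathfrak a$, and put $\mathfrak q=(\mathfrak a:_R ab)$, so $abN=(0:_M\mathfrak q)$. Then $a(sec(N))\subseteq abN$ is equivalent to $\mathfrak q\,a\,sec(N)=0$, i.e.\ to $a\mathfrak q\subseteq Ann_R(sec(N))$, and likewise $b(sec(N))\subseteq abN$ is equivalent to $b\mathfrak q\subseteq Ann_R(sec(N))$. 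By the first paragraph it therefore suffices to prove that $a\mathfrak q\subseteq\sqrt{\mathfrak a}$ or $b\mathfrak q\subseteq\sqrt{\mathfrak a}$; either one then delivers the corresponding inclusion required in Theorem~\ref{t1.5}(c).

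This last alternative is the heart of the matter. For any $r\in\mathfrak q$ we have $abr\in\mathfrak a$, so, $\mathfrak a$ being $2$-absorbing primary and $ab\notin\mathfrak a$, either $ar\in\sqrt{\mathfrak a}$ or $br\in\sqrt{\mathfrak a}$; the difficulty is to make this choice uniformly in $r$. I would argue by contradiction, exactly in the spirit of the proof of Lemma~\ref{l1.3}: suppose there are $s,t\in\mathfrak q$ with $as\notin\sqrt{\mathfrak a}$ and $bt\notin\sqrt{\mathfrak a}$. From $abs\in\mathfrak a$, together with $ab\notin\mathfrak a$ and $as\notin\sqrt{\mathfrak a}$, get $bs\in\sqrt{\mathfrak a}$; symmetrically $at\in\sqrt{\mathfrak a}$. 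Now $ab(s+t)\in\mathfrak a$ and $ab\notin\mathfrak a$ force $a(s+t)\in\sqrt{\mathfrak a}$ or $b(s+t)\in\sqrt{\mathfrak a}$; the first contradicts $as\notin\sqrt{\mathfrak a}$ (subtract $at\in\sqrt{\mathfrak a}$), the second contradicts $bt\notin\sqrt{\mathfrak a}$ (subtract $bs\in\sqrt{\mathfrak a}$). Hence one of $a\mathfrak q\subseteq\sqrt{\mathfrak a}$, $b\mathfrak q\subseteq\sqrt{\mathfrak a}$ holds.

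Assembling the pieces gives $a(sec(N))\subseteq abN$ or $b(sec(N))\subseteq abN$ or $abN=0$ for all $a,b\in R$, so $N$ satisfies Theorem~\ref{t1.5}(c) and is therefore a strongly $2$-absorbing secondary submodule of $M$. The only genuine obstacle is the uniformization step in the third paragraph; the comultiplication bookkeeping that converts module containments into ideal containments is routine, and—conveniently—only the easy inclusion $\sqrt{\mathfrak a}\subseteq Ann_R(sec(N))$ is needed, not the full equality $Ann_R(sec(N))=\sqrt{\mathfrak a}$.
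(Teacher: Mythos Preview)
Your proof is correct and follows essentially the same route as the paper: write the target submodule as $(0:_M I)$ via the comultiplication hypothesis, apply the $2$-absorbing primary condition to $Iab\subseteq Ann_R(N)$, and finish using the easy inclusion $\sqrt{Ann_R(N)}\subseteq Ann_R(sec(N))$. The only differences are cosmetic: you verify criterion~(c) of Theorem~\ref{t1.5} (taking $K=abN$) rather than the general-$K$ formulation, and you spell out the ``uniformization'' step ($a\mathfrak q\subseteq\sqrt{\mathfrak a}$ or $b\mathfrak q\subseteq\sqrt{\mathfrak a}$) that the paper treats as immediate from the definition of $2$-absorbing primary.
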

\begin{proof}
Let $abN \subseteq K$ for some $a, b \in R$ and some submodule $K$ of
$M$. As $M$ is a comultiplication module, there exists an ideal $I$ of $R$ such that $K =(0:_MI)$. Hence $Iab \subseteq Ann_R(N)$ which implies that either $Ia \subseteq \sqrt{Ann_R(N)}$ or $Ib \subseteq \sqrt{Ann_R(N)}$ or $ab \in Ann_R(N)$. If $ab \in Ann_R(N)$, we are done. If $Ia \subseteq \sqrt{Ann_R(N)}$, as  $\sqrt{Ann_R(N)}\subseteq Ann_R(sec(N))$, we have $Ia(sec(N))=0$. This implies that $a(sec(N)) \subseteq K$ because $M$ is a comultiplication module. Similarly, if $Ib \subseteq \sqrt{Ann_R(N)}$, we get $b(sec(N)) \subseteq K$. This completes the proof.
\end{proof}

The following example shows that Theorem \ref{t9.5} is not satisfied in general.
\begin{ex}\label{e9.5}
Consider the $\Bbb Z$-module $M = \Bbb Z_p\oplus \Bbb Z_q\oplus \Bbb Q$, where $p \not =q$ are two prime numbers. Then $M$ is not a comultiplication $\Bbb Z$-module and $Ann_{\Bbb Z}(M)=0$ is a 2-absorbing primary ideal of $R$. But since $0\not= pqM \subseteq 0 \oplus 0 \oplus \Bbb Q$, $sec(M)=M$, $pM\not \subseteq 0 \oplus 0 \oplus \Bbb Q$, and $qM \not \subseteq 0 \oplus 0 \oplus \Bbb Q$, $M$ is not a strongly 2-absorbing secondary module.
\end{ex}

In \cite[2.6]{mtoa16}, it is shown that, if $M$ is a finitely generated multiplication $R$-module and $N$ is a 2-absorbing primary submodule of $M$, then
$M$-$rad(N)$ is a 2-absorbing submodule of $M$. In the following lemma, we see that some of this conditions are redundant.

\begin{lem}\label{l9.6}
Let $N$ be a 2-absorbing primary submodule of an $R$-module $M$. Then
$M$-$rad(N)$ is a 2-absorbing submodule of $M$.
\end{lem}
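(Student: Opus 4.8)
The plan is to reduce the $2$-absorbing property of $M$-$rad(N)$ to the $2$-absorbing primary property of $N$ by a ``powers'' trick, using throughout that $M$-$rad(N)$ is, by definition, the intersection of the prime submodules of $M$ containing $N$. Concretely, I would fix $a,b\in R$ and $m\in M$ with $abm\in M$-$rad(N)$, and show that $am\in M$-$rad(N)$, or $bm\in M$-$rad(N)$, or $abM\subseteq M$-$rad(N)$ (equivalently $ab\in (M$-$rad(N):_RM)$). Since each of these memberships can be checked one prime submodule at a time, the whole argument runs inside an arbitrary prime submodule $P$ with $N\subseteq P$, where we use repeatedly that $(P:_RM)$ is a prime ideal, so that $a^k\in(P:_RM)$ forces $a\in(P:_RM)$, and likewise for $b$ and for $ab$.

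The crux is to pass from $abm\in M$-$rad(N)$ to a genuine membership $(ab)^k m\in N$ for some integer $k\ge 1$. Granting this, write $(ab)^k m=a^k b^k m\in N$ and apply the $2$-absorbing primary hypothesis to the triple $a^k,b^k,m$: then $a^k m\in M$-$rad(N)$, or $b^k m\in M$-$rad(N)$, or $a^k b^k\in(N:_RM)$. In the first case, for every prime submodule $P$ with $N\subseteq P$ we have $a^k m\in P$, hence $m\in P$ or $a\in(P:_RM)$, and in either event $am\in P$; so $am\in M$-$rad(N)$. The second case gives $bm\in M$-$rad(N)$ symmetrically. In the third case $(ab)^k M\subseteq N\subseteq P$ for every such $P$, so $ab\in(P:_RM)$ and hence $abM\subseteq P$, whence $abM\subseteq M$-$rad(N)$. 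Any one of the three outcomes exhibits $M$-$rad(N)$ as a $2$-absorbing submodule.

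The main obstacle is exactly the first sentence of the previous paragraph: why should $abm\in M$-$rad(N)$ entail $(ab)^k m\in N$? This is an instance of the radical formula for $M$; it holds (and then in fact $M$-$rad(N)=\sqrt{(N:_RM)}M$) when $M$ is finitely generated and multiplication, which explains the hypotheses imposed in \cite[2.6]{mtoa16}, but it can fail for a general module. A natural way to try to dispense with those hypotheses is to argue through the annihilator: one would show that $N$ being $2$-absorbing primary makes $(N:_RM)$ a $2$-absorbing primary ideal, so that $\sqrt{(N:_RM)}$ is a $2$-absorbing ideal by \cite{BUY14}; then, since $(P:_RM)$ is a prime ideal containing $(N:_RM)$, hence containing $\sqrt{(N:_RM)}$, for every prime submodule $P\supseteq N$, one gets tight control over which of $a$ and $b$ can lie in $(P:_RM)$, and one would finish by a case analysis of the primes $P\supseteq N$ according to whether $m\in P$, $aM\subseteq P$, or $bM\subseteq P$. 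The delicate point to watch in this route is the prime submodules that already contain $m$: there the prime-submodule bookkeeping says nothing, and one must genuinely feed the $2$-absorbing primary condition on $N$ itself back in. (One should also tacitly assume $M$-$rad(N)\ne M$, i.e.\ that $N$ lies in at least one prime submodule, since otherwise $M$-$rad(N)$ is not proper.)
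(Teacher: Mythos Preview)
Your proposal is not a proof: you yourself isolate the fatal gap, namely that $abm\in M\text{-}rad(N)$ does not in general force $(ab)^{k}m\in N$ for any $k$, since that implication is precisely an instance of the radical formula for $M$ and can fail outside the finitely generated multiplication setting. Your fallback annihilator route is only a sketch, and as you concede it stalls at the primes $P\supseteq N$ that already contain $m$, where no information about $a$ or $b$ is extracted. What you have written is an accurate diagnosis of why the ``powers'' argument that works for ideals does not transplant to modules, but it does not establish the lemma.

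The paper's argument is entirely different and never attempts to descend from $M\text{-}rad(N)$ back to $N$, so the obstruction you found simply does not arise. It invokes only the idempotence $M\text{-}rad\bigl(M\text{-}rad(N)\bigr)=M\text{-}rad(N)$ from \cite{Lu89}. The point is that for a proper submodule $K$ the defining conditions ``$2$-absorbing'' and ``$2$-absorbing primary'' differ only in that the latter places $am$ and $bm$ in $M\text{-}rad(K)$ rather than in $K$ itself; hence any $2$-absorbing primary submodule $K$ with $M\text{-}rad(K)=K$ is automatically $2$-absorbing. Taking $K=M\text{-}rad(N)$ and using idempotence collapses the primary condition to the ordinary one. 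In short, the paper replaces your unavailable ``powers'' step by the observation that the $M$-radical is already its own $M$-radical, which is exactly the missing idea in your attempt.
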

\begin{proof}
This follows from the fact that $M$-$rad(M$-$rad(N))=M$-$rad(N)$ by \cite[Proposition 2]{Lu89}.
\end{proof}

\begin{prop}\label{p1.5}
Let $M$ be an $R$-module. Then we have the following.
\begin{itemize}
  \item [(a)]
If $N$ is a 2-absorbing (resp. strongly 2-absorbing) secondary submodule of an $R$-module $M$, then $sec(N)$ is a 2-absorbing (resp. strongly 2-absorbing) second submodule of $M$.
  \item [(b)]
If $N$ is a second radical submodule of $M$, then $N$ is a 2-absorbing (resp. strongly 2-absorbing) second submodule if and only if $N$ is a 2-absorbing (resp. strongly 2-absorbing) secondary submodule.
\end{itemize}
\end{prop}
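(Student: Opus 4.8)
The plan is to get (b) first, essentially by unwinding definitions, and then to bootstrap (a) from it. For (b): if $sec(N)=N$, then in the defining property of a $2$-absorbing secondary submodule every occurrence of $sec(N)$ becomes $N$, so that property literally coincides with the defining property of a $2$-absorbing second submodule, and the two notions agree. For the strongly version I would invoke Theorem \ref{t1.5}(c): ``$N$ strongly $2$-absorbing secondary'' says that for all $a,b\in R$, $a(sec(N))\subseteq abN$ or $b(sec(N))\subseteq abN$ or $abN=0$, which under $sec(N)=N$ reads $aN\subseteq abN$ or $bN\subseteq abN$ or $abN=0$; this is exactly the assertion that $N$ is strongly $2$-absorbing second (one implication by applying the definition with $K=abN$, the other since $aN\subseteq abN\subseteq K$ and $bN\subseteq abN\subseteq K$ whenever $abN\subseteq K$).

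Now (a). Because $sec(N)$ is a sum of second submodules, $sec(sec(N))=sec(N)$, so $sec(N)$ is a second radical submodule (I tacitly assume $sec(N)\neq 0$, the other case being vacuous), and by part (b) applied to $sec(N)$ it is enough to prove that $sec(N)$ is itself a $2$-absorbing (resp. strongly $2$-absorbing) secondary submodule. For this I would fix a decomposition $sec(N)=\sum_i S_i$ with the $S_i$ the second submodules of $M$ contained in $N$ and $P_i:=Ann_R(S_i)$ prime; since each $S_i$ is second, $aS_i\in\{S_i,0\}$, whence $a(sec(N))=\sum_{a\notin P_i}S_i$ and $ab(sec(N))=\sum_{ab\notin P_i}S_i$ with $\{i:ab\notin P_i\}=\{i:a\notin P_i\}\cap\{i:b\notin P_i\}$ by primeness; and I would use freely that $a\bigl(a(sec(N))\bigr)=a(sec(N))$.

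In the non-strongly case, take $a,b\in R$ and a completely irreducible $L$ with $ab(sec(N))\subseteq L$; the goal is $a(sec(N))\subseteq L$, or $b(sec(N))\subseteq L$, or $ab\in Ann_R(sec(N))$. If $N\subseteq L$ (hence if $sec(N)\subseteq L$) we are done since $a(sec(N))\subseteq sec(N)\subseteq L$; and if $abN\subseteq L$ the $2$-absorbing secondary property of $N$ applies directly, its alternative $ab\in Ann_R(N)$ yielding $ab\in Ann_R(sec(N))$. The remaining, and essential, case is $abN\not\subseteq L$ with $a(sec(N))\not\subseteq L$ and $b(sec(N))\not\subseteq L$: assuming also $ab(sec(N))\neq 0$, one extracts from $ab(sec(N))\subseteq L$ and the decomposition above second submodules $S,S',S''\subseteq N$ with $abS=S\subseteq L$ and $S\neq 0$, with $aS'=S'\not\subseteq L$ and $bS'=0$, and with $bS''=S''\not\subseteq L$ and $aS''=0$, and one must then reach a contradiction by applying the $2$-absorbing secondary condition of $N$ to suitable completely irreducible submodules containing $abN$. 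The strongly case is analogous but cleaner to run through Theorem \ref{t1.5}(c): applied to $sec(N)$ it reduces the goal to showing that for all $a,b$, $a(sec(N))\subseteq ab(sec(N))$ or $b(sec(N))\subseteq ab(sec(N))$ or $ab(sec(N))=0$, while the hypothesis (Theorem \ref{t1.5}(c) for $N$) gives the same with $abN$ replacing $ab(sec(N))$; one disposes of $ab(sec(N))=0$, reduces after a swap to $a(sec(N))\subseteq abN$, and promotes this to the required inclusion using $a(a(sec(N)))=a(sec(N))$ and the primeness of the $P_i$.

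The step I expect to be the real obstacle is exactly this off-diagonal case, and behind it the gap between statements about $abN$ and statements about $ab(sec(N))$: the hypothesis on $N$ only controls completely irreducible submodules containing $abN$, which are in general much larger than a completely irreducible $L$ containing just $ab(sec(N))$ (already the $\Bbb Z$-module $\Bbb Z_{p^2}$ shows $sec(abN)$ and $ab(sec(N))$ can differ). Getting past this is what forces the passage to the decomposition $sec(N)=\sum_i S_i$ and the idempotency of $sec$, which turn the absorbing conditions into combinatorial statements about the prime ideals $P_i$; carrying those out is the bulk of the work.
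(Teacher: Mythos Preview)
The paper's proof is two lines: part (a) ``follows from the fact that $sec(sec(N))=sec(N)$'' (citing \cite{AF25}), and part (b) ``follows from part (a)''. So the paper runs in the opposite order from you and treats both parts as immediate.

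Your treatment of (b) is correct and amounts to the same observation the paper is using: once $sec(N)=N$, the defining conditions of ``$2$-absorbing second'' and ``$2$-absorbing secondary'' are literally identical. For (a) you diverge sharply, mounting a decomposition argument via the second submodules $S_i\subseteq N$ with prime annihilators $P_i$, and you have correctly isolated the point that makes one pause: the $2$-absorbing secondary hypothesis on $N$ only applies when $abN\subseteq L$, while the conclusion for $sec(N)$ must fire under the weaker premise $ab\,sec(N)\subseteq L$. The paper's one-line argument does not address this passage at all.

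However, your own proposal does not close this gap either. In the non-strongly case you reduce to an ``off-diagonal'' configuration and then say only that ``one must reach a contradiction by applying the $2$-absorbing secondary condition of $N$ to suitable completely irreducible submodules containing $abN$'' --- without exhibiting those submodules or explaining why the three-way alternative for $N$ rules the configuration out; this is exactly the descent from the $abN$-level to the $ab\,sec(N)$-level that you yourself flag as the obstacle. In the strongly case, the claimed promotion of $a\,sec(N)\subseteq abN$ to $a\,sec(N)\subseteq ab\,sec(N)$ via ``$a(a\,sec(N))=a\,sec(N)$ and primeness of the $P_i$'' does not go through as stated: iterating $a$ only gives $a\,sec(N)\subseteq a^{n}bN$, which is not containment in $ab\,sec(N)$. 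In short, the paper asserts triviality from idempotency of $sec$; you rightly suspect more is needed; but your sketch does not supply it.
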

\begin{proof}
(a) This follows from the fact that $sec(sec(N))=sec(N)$ by \cite[2.1]{AF25}.

(b) This follows from part (a)
\end{proof}

Let $N$ and $K$ be two submodules of an $R$-module $M$. The \emph{coproduct} of $N$ and $K$ is defined by $(0:_MAnn_R(N)Ann_R(K))$ and denoted by $C(NK)$ \cite{AF007}.

\begin{thm}\label{t9.7}
Let $N$ be a submodule of an $R$-module $M$ such that $sec(N)$ is a second submodule of $M$. Then we have the following.
\begin{itemize}
  \item [(a)] $N$ is a strongly 2-absorbing secondary submodule of $M$.
  \item [(b)] If $M$ is a comultiplication $R$-module, then $C(N^t)$ is a strongly 2-absorbing secondary submodule of $M$ for every positive integer $t\geq 1$, where $C(N^t)$ means the coproduct of N, t times.
\end{itemize}
\end{thm}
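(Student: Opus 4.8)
The plan is to prove (a) directly from the dichotomy enjoyed by a second submodule, and then to deduce (b) from (a) by first identifying $sec(C(N^t))$ with $sec(N)$.

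\textbf{Part (a).} Since $sec(N)$ is second it is non-zero, so $N\neq 0$. I would verify the defining condition of a strongly 2-absorbing secondary submodule in its submodule form (which is Theorem~\ref{t1.5}(b) applied to the principal ideals $Ra$, $Rb$, equivalently Theorem~\ref{t1.5}(c)): given $a,b\in R$ and a submodule $K$ of $M$ with $abN\subseteq K$, multiply $sec(N)\subseteq N$ by $ab$ to get $ab\,sec(N)\subseteq abN\subseteq K$. Since $sec(N)$ is second, either $ab\,sec(N)=sec(N)$ or $ab\,sec(N)=0$. In the first case $sec(N)=ab\,sec(N)\subseteq abN\subseteq K$, so in particular $a\,sec(N)\subseteq sec(N)\subseteq K$. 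In the second case $ab\in Ann_R(sec(N))$, which is prime because $sec(N)$ is second, so $a\in Ann_R(sec(N))$ or $b\in Ann_R(sec(N))$, i.e. $a\,sec(N)=0\subseteq K$ or $b\,sec(N)=0\subseteq K$. Either way one of the three alternatives holds.

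\textbf{Part (b).} Here I would fix $t\geq 1$, recall that $C(N^t)=(0:_M(Ann_R(N))^t)$, and prove the key equality $sec(C(N^t))=sec(N)$; then part~(a) applies verbatim to $C(N^t)$. Because $M$ is a comultiplication module, $N=(0:_M Ann_R(N))\subseteq(0:_M(Ann_R(N))^t)=C(N^t)$, so $sec(N)\subseteq sec(C(N^t))$. For the reverse inclusion I would take an arbitrary second submodule $S$ of $M$ with $S\subseteq C(N^t)$; then $(Ann_R(N))^t\subseteq Ann_R(C(N^t))\subseteq Ann_R(S)$, and primeness of $Ann_R(S)$ forces $Ann_R(N)\subseteq Ann_R(S)$, whence $Ann_R(N)\,S\subseteq Ann_R(S)\,S=0$ and so $S\subseteq(0:_M Ann_R(N))=N$ (using comultiplication once more); thus $S\subseteq sec(N)$. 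Summing over all such $S$ gives $sec(C(N^t))\subseteq sec(N)$, hence equality, so $sec(C(N^t))=sec(N)$ is a second submodule of $M$; since also $C(N^t)\supseteq N\neq 0$, part~(a) yields that $C(N^t)$ is a strongly 2-absorbing secondary submodule of $M$.

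The tools needed — the second-submodule dichotomy, primeness of the annihilator of a second submodule, and the comultiplication identity $N=(0:_M Ann_R(N))$ — are all on hand from the earlier material, so no single step is deep. The point that will need the most care is the inclusion $sec(C(N^t))\subseteq sec(N)$: one has to pass from $(Ann_R(N))^t\subseteq Ann_R(S)$ to $Ann_R(N)\subseteq Ann_R(S)$ through primeness and then invoke the comultiplication identity a second time to get back inside $N$. I expect this to be the only genuine obstacle.
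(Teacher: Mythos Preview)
Your proof is correct and follows essentially the same route as the paper. For (a) both arguments exploit the second-submodule dichotomy on $sec(N)$ (the paper applies it to $a$ after passing to $(K:_Mb)$, you apply it to $ab$ and then use primeness of $Ann_R(sec(N))$, which amounts to the same thing); for (b) both reduce to (a) via the equality $sec(C(N^t))=sec(N)$, the only difference being that the paper obtains this equality by citing \cite[2.1]{AF25}, whereas you supply a direct self-contained argument using primeness of $Ann_R(S)$ and the comultiplication identity.
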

\begin{proof}
(a) Let $a, b \in R$, $K$ be a submodule of $M$ such that $abN \subseteq K$, and $b(sec(N)) \not \subseteq K$. Then as $sec(N)$ is a second submodule and $a(sec(N)) \subseteq aN \subseteq (K:_Mb)$, we have $a(sec(N))=0$ by \cite[2.10]{AF12}. Thus $a(sec(N)) \subseteq K$, as needed.

(b) Let $M$ be a comultiplication $R$-module. Then there exists an ideal $I$ of $R$ such that $N = (0:_MI)$. Thus by \cite[2.1]{AF25},
$$
sec(c(N^t))=sec((0:_MI^t))=sec((0:_MI))=sec(N).
$$
Now the results follows from to the proof of part (a).
\end{proof}

\begin{thm}\label{t9.8}
Let $M$ be a comultiplication $R$-module.
Then we have the following.
\begin{itemize}
  \item [(a)] If $N_1, N_2, ..., N_n$ are strongly
2-absorbing secondary submodules of $M$ with the same second radical, then $N =\sum_{i=1}^n N_i$  is a  strongly 2-absorbing secondary submodule of $M$.
  \item [(b)] If $N_1, N_2, ..., N_n$ are 2-absorbing secondary submodules of $M$ with the same second radical, then $N =\sum_{i=1}^n N_i$  is a  2-absorbing secondary submodule of $M$.
  \item [(c)] If $N_1$ and $N_2$ are two secondary submodules of $M$, then $N_1+N_2$ is a strongly 2-absorbing secondary submodule of $M$.
  \item [(d)] If $M$ is finitely generated, $N$ is a submodule
of $M$ which possess a secondary representation, and $sec(N)=K_1+K_2$, where $K_1$ and $K_2$ are two minimal submodules of $M$, then $N$ is a strongly 2-absorbing secondary submodule of $M$.
\end{itemize}
\end{thm}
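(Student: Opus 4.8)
The overall strategy splits the four parts into two groups. Parts (a) and (b) will be handled by a direct case analysis whose key preliminary is the computation of $sec(N)$ for $N=\sum_i N_i$. Parts (c) and (d) will be reduced to Theorem \ref{t9.5}: since $M$ is a comultiplication module, to prove that a nonzero submodule $P$ of $M$ is strongly 2-absorbing secondary it suffices to prove that $Ann_R(P)$ is a 2-absorbing primary ideal of $R$. Throughout I will use, for a comultiplication module, the two facts $P=(0:_M Ann_R(P))$ for every submodule $P$, and $Ann_R(sec(P))=\sqrt{Ann_R(P)}$ by \cite[2.12]{AF25}; combining them gives the formula $sec(P)=(0:_M \sqrt{Ann_R(P)})$, which is the computational tool behind (a) and (b).

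For (a) and (b), write $S$ for the common value of the $sec(N_i)$ and $N=\sum_{i=1}^n N_i$. First I would pin down $sec(N)$: from $Ann_R(N)=\bigcap_{i=1}^n Ann_R(N_i)$ we get $\sqrt{Ann_R(N)}=\bigcap_{i=1}^n \sqrt{Ann_R(N_i)}=\bigcap_{i=1}^n Ann_R(S)=Ann_R(S)$, and substituting this into the formula above together with $S=(0:_M Ann_R(S))$ yields $sec(N)=S$. With this identity the two parts reduce to a short case analysis. Suppose $abN\subseteq L$, where $L$ is an arbitrary submodule of $M$ for (a) (it suffices to take $L=abN$, so as to verify condition (c) of Theorem \ref{t1.5}) and $L$ is a completely irreducible submodule for (b). Then $abN_i\subseteq L$ for every $i$, so applying the hypothesis on $N_i$ we find, for each $i$, that $aS\subseteq L$, or $bS\subseteq L$, or $abN_i=0$ (resp.\ $ab\in Ann_R(N_i)$). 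If the first alternative holds for some $i$, then $a(sec(N))=aS\subseteq L$ and we are done; symmetrically for the second; and if neither ever holds, then $abN_i=0$ for all $i$, whence $abN=0$ (resp.\ $ab\in\bigcap_i Ann_R(N_i)=Ann_R(N)$). Since each $N_i$ is nonzero, $N\neq 0$, so $N$ is a strongly 2-absorbing (resp.\ 2-absorbing) secondary submodule.

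For (c), each $N_j$ being secondary forces $Ann_R(N_j)$ to be a primary ideal of $R$, and $Ann_R(N_1+N_2)=Ann_R(N_1)\cap Ann_R(N_2)$ is a proper ideal; by the fact from \cite{BUY14} that an intersection of two primary ideals is a 2-absorbing primary ideal, $Ann_R(N_1+N_2)$ is 2-absorbing primary, and Theorem \ref{t9.5} finishes the case. For (d), I would reduce to (c). Choose a minimal secondary representation $N=\sum_{i=1}^r N_i$, so that the ideals $Q_i:=Ann_R(N_i)$ are primary with pairwise distinct radicals and $Ann_R(N)=\bigcap_{i=1}^r Q_i$. Since $sec(N)=K_1+K_2$ with $K_1,K_2$ simple, $Ann_R(K_1)$ and $Ann_R(K_2)$ are maximal ideals of $R$, and $\sqrt{Ann_R(N)}=Ann_R(sec(N))=Ann_R(K_1)\cap Ann_R(K_2)$. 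Each $\sqrt{Q_i}$ contains $\sqrt{Ann_R(N)}\supseteq Ann_R(K_1)Ann_R(K_2)$ and is prime, hence contains $Ann_R(K_1)$ or $Ann_R(K_2)$, hence equals one of these two maximal ideals; as the $\sqrt{Q_i}$ are distinct, $r\leq 2$. Thus $N$ is either secondary — in which case $Ann_R(N)$ is primary, \emph{a fortiori} 2-absorbing primary, and Theorem \ref{t9.5} applies — or a sum of two secondary submodules, and part (c) applies.

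The proof is essentially a synthesis of earlier results, so the difficulty lies in assembling the right inputs rather than in any single deep step. The two places needing care are the identification $sec(\sum_i N_i)=S$ in (a) and (b), which genuinely uses the comultiplication hypothesis through the formula $sec(P)=(0:_M \sqrt{Ann_R(P)})$, and the bookkeeping in (d) that forces a minimal secondary representation to have at most two terms. The one external ingredient I am relying on is the fact from \cite{BUY14} that an intersection of two primary ideals is a 2-absorbing primary ideal; should it not be available in exactly that form, one can instead verify the 2-absorbing primary condition for $Q_1\cap Q_2$ directly, by the usual case analysis on a product $abc\in Q_1\cap Q_2$ using the primality of $\sqrt{Q_1}$ and $\sqrt{Q_2}$.
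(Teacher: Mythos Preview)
Your overall architecture matches the paper's: a direct case analysis for (a) and (b) once $sec(N)$ is identified with the common value $S$, and a reduction to Theorem~\ref{t9.5} via $Ann_R$ for (c) and (d). Part (c) is identical to the paper's argument.

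The substantive difference is how you establish $sec(N)=S$ in (a) and (b). The paper simply invokes the general identity $sec\bigl(\sum_i N_i\bigr)=\sum_i sec(N_i)$ from \cite[2.6]{CAS13}, which immediately gives $sec(N)=\sum_i S=S$ and makes no use of the comultiplication hypothesis. Your route instead passes through the formula $Ann_R(sec(P))=\sqrt{Ann_R(P)}$ from \cite[2.12]{AF25}; but note that the paper itself invokes that equality only in Theorem~\ref{t9.4}, where $M$ is assumed finitely generated, and in Theorem~\ref{t9.5} uses merely the easy inclusion $\sqrt{Ann_R(N)}\subseteq Ann_R(sec(N))$. So your derivation of $sec(N)=S$ leans on a hypothesis (finite generation) that parts (a) and (b) do not supply. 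This is easy to fix by citing \cite[2.6]{CAS13} instead, and the rest of your case analysis then coincides with the paper's.

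For (d) your argument is a genuine variant. The paper takes an arbitrary secondary representation, observes via \cite{AF25} that each $sec(N_i)$ is a second submodule and hence, being nonzero and contained in $K_1+K_2$ with $K_1,K_2$ minimal, must equal $K_1$ or $K_2$; it then groups the $N_i$ accordingly and uses \cite[2.12]{AF25} to see that each group sums to a secondary submodule, reducing to (c). You instead pass to a \emph{minimal} secondary representation, use $\sqrt{Ann_R(N)}=Ann_R(K_1)\cap Ann_R(K_2)$ (legitimate here since (d) does assume $M$ finitely generated), and force $r\le 2$ by maximality of the $Ann_R(K_j)$. Both arguments are correct; yours trades the ``sum of secondaries with equal second radical is secondary'' lemma for the standard fact that a minimal secondary representation has pairwise distinct attached primes.
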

\begin{proof}
(a) Let $a, b \in R$ and $K$ be a submodule of $M$ such that $abN \subseteq K$. Thus for each $i=1,2, ...,n$, $abN_i \subseteq K$. If there exists $1 \leq j \leq n$ such that $a(sec(N_j)) \subseteq K$ or $b(sec(N_j)) \subseteq K$, then
$a(sec(N)) \subseteq K$  or $b(sec(N)) \subseteq K$ (note that $sec(N)=sec(\sum_{i=1}^nN_i)=\sum_{i=1}^nsec(N_i)=sec(N_i)$ by \cite[2.6]{CAS13}).  Otherwise, $abN_i=0$ for each $i=1,2,...,n$. Hence $abN=0$, as desired.

(b) The proof is similar to the part (a).

(c) As $N_1$ and $N_2$ are secondary submodules of $M$,  $Ann_R(N_1)$ and $Ann_R(N_2)$ are primary ideals of $R$. Hence $Ann_R(N_1+N_2)=Ann_R(N_1) \cap Ann_R(N_2)$ is a 2-absorbing primary ideal of $R$ by \cite[2.4]{BUY14}. Thus by Theorem \ref{t9.5}, $N_1+ N_2$ is a strongly 2-absorbing secondary submodule of $M$.

(d) Let $N = \sum_{i=1}^nN_i$  be a secondary representation. By \cite[2.6]{AF25},
 $sec(N)=\sum_{i=1}^nsec(N_i)$. Since $sec(N_i)$'s are second submodules of $M$ by \cite[2.13]{AF25}, we have
 $$
 \{sec(N_1),sec(N_2), ..., sec(N_n)\}=\{K_1, K_2\}.
 $$
Without loss of generality, we
may assume that for some $1 \leq  t < n$, $\{sec(N_1), ..., sec(N_t)\}=\{K_1\}$ and $\{sec(N_{t+1}), ..., sec(N_n)\}=\{K_2\}$. Set $H_1:= N_1+...+N_t$ and $H_2:=N_{t+1}+...N_n$. By \cite[2.12]{AF25}, $H_1$ and $H_2$ are secondary submodules of $M$. Therefore, by part (c),
$N =H_1+H_2$ is a strongly 2-absorbing secondary submodule of $M$.
\end{proof}

The following example shows that the direct sum of two strongly 2-absorbing secondary $R$-modules is not a strongly 2-absorbing secondary $R$-module in general.
\begin{ex}\label{t9.7}
Clearly, the $\Bbb Z$-modules $\Bbb Z_6$ and $\Bbb Z_{10}$ are strongly 2-absorbing secondary $\Bbb Z$-modules. Let $M=\Bbb Z_6\oplus \Bbb Z_{10}$. Then $M$ is not a strongly 2-absorbing second $\Bbb Z$-module. By \cite[2.1]{AF12}, $sec(M)=M$. Thus $M$ is not a strongly 2-absorbing secondary $\Bbb Z$-module by Proposition  \ref{p1.5}.
\end{ex}

\begin{lem}\label{l9.9}
Let $f : M \rightarrow \acute{M}$ be a monomorphism of R-modules. Then we have the following.
\begin{itemize}
  \item [(a)] If $N$ is a submodule of $M$, then $sec(f(N))= f(sec(N))$.
  \item [(b)] If $\acute{N}$ is a submodule of $\acute{M}$ such that $\acute{N} \subseteq f(M)$, then $sec(f^{-1}(\acute{N}))=f^{-1}(sec(\acute{N}))$.
\end{itemize}
\end{lem}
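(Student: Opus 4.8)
The plan is to set up, using injectivity of $f$, a bijective correspondence between the second submodules of $M$ contained in $N$ and the second submodules of $\acute{M}$ contained in $f(N)$, and then pass to sums, since $sec(\cdot)$ is by definition such a sum.

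First I would record the elementary consequences of $f$ being a monomorphism: for every submodule $X$ of $M$ one has $f^{-1}(f(X))=X$, and for every submodule $T$ of $\acute{M}$ with $T\subseteq f(M)$ one has $f(f^{-1}(T))=T$; moreover $f$ restricts to an isomorphism $X\to f(X)$, and, when $T\subseteq f(M)$, to an isomorphism $f^{-1}(T)\to T$. Next I would check that secondness transfers along these isomorphisms in both directions. If $S$ is a second submodule of $M$, then for each $a\in R$ we have $af(S)=f(aS)$, which equals $f(S)$ or $0$ according as $aS=S$ or $aS=0$, and $f(S)\neq 0$ since $S\neq 0$ and $f$ is injective; hence $f(S)$ is second. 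Symmetrically, if $T\subseteq f(M)$ is a second submodule of $\acute{M}$, then $f^{-1}(T)$ is second, and it is nonzero because $f(f^{-1}(T))=T\neq 0$. Finally, $S\subseteq N$ gives $f(S)\subseteq f(N)$, while $T\subseteq f(N)$ gives $f^{-1}(T)\subseteq f^{-1}(f(N))=N$. This establishes the correspondence.

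For part (a), since a module homomorphism carries sums of submodules to sums of images, $f(sec(N))=f\big(\sum\{S:S\text{ second},\ S\subseteq N\}\big)=\sum\{f(S):S\text{ second},\ S\subseteq N\}\subseteq sec(f(N))$, each $f(S)$ being a second submodule of $\acute{M}$ inside $f(N)$. Conversely, every second submodule $T$ of $\acute{M}$ with $T\subseteq f(N)$ lies in $f(M)$, so $T=f(f^{-1}(T))$ with $f^{-1}(T)$ a second submodule of $M$ contained in $N$; hence $T\subseteq f(sec(N))$, and summing over all such $T$ yields $sec(f(N))\subseteq f(sec(N))$. (If $N$ contains no second submodule, the same argument shows $f(N)$ contains none either, and both sides equal $(0)$.) For part (b), since $\acute{N}\subseteq f(M)$ we have $f(f^{-1}(\acute{N}))=\acute{N}$, so applying part (a) to the submodule $f^{-1}(\acute{N})$ of $M$ gives $sec(\acute{N})=sec\big(f(f^{-1}(\acute{N}))\big)=f\big(sec(f^{-1}(\acute{N}))\big)$; taking preimages and using $f^{-1}(f(X))=X$ with $X=sec(f^{-1}(\acute{N}))$ gives $f^{-1}(sec(\acute{N}))=sec(f^{-1}(\acute{N}))$, as required.

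The only point requiring genuine care is the two-way transfer of the "second" property together with the nonvanishing of images and preimages of second submodules; once the correspondence $S\leftrightarrow f(S)$ is in place, both equalities follow formally from the definition of the second radical, so I anticipate no serious obstacle.
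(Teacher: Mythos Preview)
Your proof is correct and follows the same underlying idea as the paper: secondness is an intrinsic property, so second submodules transfer along the isomorphisms induced by the monomorphism $f$, and the equalities then follow from the definition of $sec(\cdot)$ as a sum. The one organizational difference is that for part (b) the paper argues both inclusions directly (mirroring the argument for (a)), whereas you deduce (b) from (a) via $f(f^{-1}(\acute{N}))=\acute{N}$ and $f^{-1}(f(X))=X$; your route is a bit tidier and avoids repeating the transfer argument, while the paper's version keeps the two parts logically independent.
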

\begin{proof}
(a) Let $\acute{S}$ be a second submodule of $f(N)$. Then one can see that $f^{-1}(\acute{S})$ is a second submodule of $N$. Thus $f(f^{-1}(\acute{S})) \subseteq f(sec(N))$. Therefore, $sec(f(N))\subseteq f(sec(N))$. The reverse inclusion is clear.

(b) Let $S$ be a second submodule of $f^{-1}(\acute{N})$. Then one can see that $f(S)$ is a second submodule of $\acute{N}$. Thus $f^{-1}(S) \subseteq f^{-1}(sec(\acute{N}))$. Therefore, $sec(f^{-1}(\acute{N}))\subseteq f^{-1}(sec(\acute{N}))$. To see the reverse inclusion, let $\acute{S}$ be a second submodule of $\acute{N}$. Then $f^{-1}(\acute{S})$ is a second submodule of $f^{-1}(\acute{N})$. This implies that $f^{-1}(sec(\acute{N})) \subseteq sec(f^{-1}(\acute{N}))$.
\end{proof}

\begin{thm}\label{t9.10}
Let $f : M \rightarrow \acute{M}$ be a monomorphism of R-modules. Then we have the following.
\begin{itemize}
  \item [(a)] If $N$ is a strongly 2-absorbing secondary submodule of $M$, then $f(N)$ is a strongly 2-absorbing secondary submodule of $\acute{M}$.
  \item [(b)] If $\acute{N}$ is a strongly 2-absorbing secondary submodule of $\acute{M}$ and $\acute{N} \subseteq f(M)$, then $f^{-1}(\acute{N})$ is a strongly 2-absorbing secondary submodule of $M$.
 \end{itemize}
\end{thm}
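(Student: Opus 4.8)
The plan is to verify the characterization of Theorem~\ref{t1.5} by transporting it across $f$. It is convenient to first record (from Theorem~\ref{t1.5}(c), taking $K=abP$ in one direction and $K\supseteq abP$ in the other) that a non-zero submodule $P$ of an $R$-module is strongly 2-absorbing secondary precisely when, for all $a,b\in R$ and every submodule $K$ with $abP\subseteq K$, one has $a(sec(P))\subseteq K$ or $b(sec(P))\subseteq K$ or $abP=0$. Throughout I will use the standard facts that for a module homomorphism $g$ and a submodule $X$ one has $g(aX)=ag(X)$ and $g(g^{-1}(X))\subseteq X$, that $g^{-1}(g(Y))=Y$ and $g^{-1}(0)=0$ when $\ker g=0$, and the one-sided inclusion $a\,g^{-1}(X)\subseteq g^{-1}(aX)$; and I will invoke Lemma~\ref{l9.9} to replace $sec(f(N))$ by $f(sec(N))$ and $sec(f^{-1}(\acute{N}))$ by $f^{-1}(sec(\acute{N}))$.

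For part (a): given $a,b\in R$ and a submodule $K$ of $\acute{M}$ with $abf(N)\subseteq K$, I would pull back to get $abN\subseteq f^{-1}(K)$ (since $f(abn)=abf(n)\in K$ for each $n\in N$), a submodule of $M$. Applying the hypothesis on $N$ gives $a(sec(N))\subseteq f^{-1}(K)$, or $b(sec(N))\subseteq f^{-1}(K)$, or $abN=0$. Pushing the first alternative forward, $a(sec(f(N)))=f(a(sec(N)))\subseteq f(f^{-1}(K))\subseteq K$ by Lemma~\ref{l9.9}(a); the second is symmetric; the third gives $abf(N)=f(abN)=0$. Since $f$ is injective and $N\neq0$, also $f(N)\neq0$, so $f(N)$ is strongly 2-absorbing secondary.

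For part (b): here the hypothesis $\acute{N}\subseteq f(M)$ is exactly what makes $f(f^{-1}(\acute{N}))=\acute{N}$. Given $a,b\in R$ and a submodule $K$ of $M$ with $abf^{-1}(\acute{N})\subseteq K$, push forward: $ab\acute{N}=ab\,f(f^{-1}(\acute{N}))=f(ab\,f^{-1}(\acute{N}))\subseteq f(K)$, a submodule of $\acute{M}$. Applying the hypothesis on $\acute{N}$ gives $a(sec(\acute{N}))\subseteq f(K)$, or $b(sec(\acute{N}))\subseteq f(K)$, or $ab\acute{N}=0$. In the first case, $a(sec(f^{-1}(\acute{N})))=a\,f^{-1}(sec(\acute{N}))\subseteq f^{-1}(a(sec(\acute{N})))\subseteq f^{-1}(f(K))=K$, using Lemma~\ref{l9.9}(b), the one-sided inclusion, and injectivity of $f$; the second is symmetric; in the third, $abf^{-1}(\acute{N})\subseteq f^{-1}(ab\acute{N})=f^{-1}(0)=0$. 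Finally $f^{-1}(\acute{N})\neq0$: choosing $0\neq y\in\acute{N}\subseteq f(M)$, write $y=f(x)$ with $x\neq0$; then $x\in f^{-1}(\acute{N})$. Hence $f^{-1}(\acute{N})$ is strongly 2-absorbing secondary.

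The computations are routine; the only place where care is needed is keeping straight which set-theoretic relations are genuine equalities and which are mere inclusions, and in which direction they run—specifically that injectivity of $f$ upgrades $K\subseteq f^{-1}(f(K))$ to equality and forces $f^{-1}(0)=0$, that the hypothesis $\acute{N}\subseteq f(M)$ upgrades $f(f^{-1}(\acute{N}))\subseteq\acute{N}$ to equality, and that $a\,f^{-1}(X)\subseteq f^{-1}(aX)$ (not the reverse) is the inclusion available in part (b). If any of these is applied with the wrong variance the argument collapses, but none of them is subtle once flagged.
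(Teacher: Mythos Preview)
Your proof is correct and follows essentially the same route as the paper: pull back along $f$ in part (a), push forward along $f$ in part (b), and invoke Lemma~\ref{l9.9} to identify $sec(f(N))$ with $f(sec(N))$ and $sec(f^{-1}(\acute{N}))$ with $f^{-1}(sec(\acute{N}))$. If anything, your write-up is slightly more careful than the paper's in isolating the one-sided inclusion $a\,f^{-1}(X)\subseteq f^{-1}(aX)$ needed in (b), which the paper leaves implicit.
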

\begin{proof}
(a) Since $N \not =0$ and $f$ is a monomorphism, we have $f(N) \not =0$. Let $a, b \in R$, $\acute{K}$ be a submodule of $\acute{M}$, and $abf(N)\subseteq \acute{K}$. Then $abN \subseteq f^{-1}(\acute{K})$. As $N$ is strongly 2-absorbing secondary submodule, $a(sec(N)) \subseteq f^{-1}(\acute{K})$ or $b(secN)) \subseteq f^{-1}(\acute{K})$ or $abN=0$. Therefore, by Lemma \ref{l9.9} (a),
 $$
 a(sec(f(N)))= a(f(sec(N))) \subseteq f(f^{-1}(\acute{K}))=f(M) \cap \acute{K} \subseteq \acute{K}
 $$
 or
 $$
 b(sec(f(N)))= b(f(sec(N))) \subseteq f(f^{-1}(\acute{K}))=f(M) \cap \acute{K} \subseteq \acute{K}
 $$
 or $abf(N)=0$, as needed.

(b) If $f^{-1}(\acute{N})=0$, then $f(M) \cap \acute{N}=f(f^{-1}(\acute{N}))=f(0)=0$. Thus $\acute{N}=0$, a contradiction. Therefore, $f^{-1}(\acute{N})\not=0$. Now let $a, b \in R$, $K$ be a submodule of $M$, and $abf^{-1}(\acute{N})\subseteq K$. Then
  $$
  ab\acute{N}=ab(f(M) \cap \acute{N})=abff^{-1}(\acute{N})\subseteq f(K).
  $$
 As $\acute{N}$ is strongly 2-absorbing secondary submodule, $a(sec(\acute{N}) \subseteq f(K)$ or $b(sec(\acute{N}) \subseteq f(K)$ or $ab\acute{N}=0$. Hence by Lemma \ref{l9.9} (b), $a(sec(f^{-1}(\acute{N})))=af^{-1}(sec(\acute{N}))\subseteq f^{-1}(f(K))=K$ or $ b(sec(f^{-1}(\acute{N})))=bf^{-1}(sec(\acute{N}))\subseteq f^{-1}(f(K))=K$ or $abf^{-1}(\acute{N})=0$, as desired.
\end{proof}

\begin{cor}\label{c9.11}
Let $M$ be an $R$-module and let $N\subseteq K$ be two submodules of $M$. Then $N$ is a strongly 2-absorbing secondary submodule of $K$ if and only if  $N$ is a strongly 2-absorbing secondary submodule of $M$.
\end{cor}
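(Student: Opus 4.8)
The plan is to obtain both implications directly from Theorem~\ref{t9.10} by taking $f$ to be the inclusion monomorphism $\iota\colon K\hookrightarrow M$. The only bookkeeping required is the observation that under this $\iota$ the operations $f(-)$ and $f^{-1}(-)$ act as the identity on the relevant submodules: for every submodule $L$ of $K$ we have $\iota(L)=L$, and for every submodule $L'$ of $M$ with $L'\subseteq K$ we have $\iota^{-1}(L')=L'\cap K=L'$. In particular $\iota(N)=N$, and since $N\subseteq K$ also $\iota^{-1}(N)=N$.

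First I would treat the forward direction. Suppose $N$ is a strongly 2-absorbing secondary submodule of $K$. Applying Theorem~\ref{t9.10}(a) to the monomorphism $\iota\colon K\to M$, we conclude that $\iota(N)=N$ is a strongly 2-absorbing secondary submodule of $M$.

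Conversely, suppose $N$ is a strongly 2-absorbing secondary submodule of $M$. Here we view $\iota\colon K\to M$ with $\acute{M}=M$ and source module $K$; since $N\subseteq K=\iota(K)$, the hypothesis $\acute{N}\subseteq f(M)$ of Theorem~\ref{t9.10}(b) is met (with $\acute{N}=N$). Hence $\iota^{-1}(N)=N$ is a strongly 2-absorbing secondary submodule of $K$. This gives the reverse implication and completes the proof.

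I do not expect a genuine obstacle: the corollary is essentially a specialization of Theorem~\ref{t9.10} to the inclusion map. The only points to state carefully are that $N$ is nonzero in both settings (automatic, as ``strongly 2-absorbing secondary'' already presupposes nonzero and $N$ is literally the same submodule on both sides) and that $\iota$ and $\iota^{-1}$ genuinely fix $N$ and send $K$ to itself, so that the conclusions of Theorem~\ref{t9.10} read exactly as the two implications of the corollary.
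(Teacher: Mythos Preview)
Your proof is correct and follows exactly the paper's approach: the paper simply says the result follows from Theorem~\ref{t9.10} by using the natural monomorphism $K\rightarrow M$, and you have spelled out the bookkeeping that makes this work.
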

\begin{proof}
This follows from Theorem \ref{t9.10} by using the natural monomorphism $K\rightarrow M$.
\end{proof}

\begin{prop}\label{p9.12}
Let $M$ be a cocyclic $R$-module with minimal submodule $K$ and $N$ be a submodule of $M$ such that $rN \not =K$ for each $r \in R$. If $N/K$ is a strongly 2-absorbing secondary submodule of $M/K$, then $N$ is a strongly 2-absorbing secondary submodule of $M$.
\end{prop}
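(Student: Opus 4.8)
The plan is to verify that $N$ satisfies condition (c) of Theorem \ref{t1.5} by passing to the quotient $M/K$ and exploiting that $M$ is cocyclic. First I would record two easy preliminary observations. Since $N/K$ is in particular a nonzero submodule of $M/K$, we have $K\subseteq N$ with $K\neq N$, so $N\neq 0$; and since $M$ is cocyclic with minimal submodule $K$, the submodule $K$ is simple and essential, hence $K$ is contained in every nonzero submodule of $M$. In particular $K$ is itself a second submodule of $M$ contained in $N$, so $K\subseteq sec(N)$ and the quotient $sec(N)/K$ is defined.

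The one genuinely new ingredient I need is the inclusion $sec(N)/K\subseteq sec(N/K)$. To see this, let $S$ be a second submodule of $M$ with $S\subseteq N$; since $S\neq 0$ we have $K\subseteq S$. If $S=K$ then $S/K=0$; if $S\neq K$, then $S/K$ is a nonzero submodule of $M/K$, and for each $r\in R$ we have $rS=S$ (which gives $r(S/K)=S/K$) or $rS=0$ (which gives $r(S/K)=0$), so $S/K$ is a second submodule of $M/K$ contained in $N/K$. As $sec(N)$ is the sum of all such submodules $S$ (each of which contains $K$), we conclude $sec(N)/K=\sum_S(S/K)\subseteq sec(N/K)$.

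For the main step I would take $a,b\in R$ and a submodule $T$ of $M$ with $abN\subseteq T$ and show that $a\,(sec(N))\subseteq T$ or $b\,(sec(N))\subseteq T$ or $abN=0$; specializing $T=abN$ at the end recovers condition (c). Assume $abN\neq 0$. Then $abN$ is a nonzero submodule of $M$, so $K\subseteq abN\subseteq T$; hence $T/K$ is a submodule of $M/K$ and $ab\,(N/K)=(abN+K)/K=abN/K\subseteq T/K$. Moreover $ab\,(N/K)\neq 0$: otherwise $abN\subseteq K$, and together with $K\subseteq abN$ this forces $abN=K$, contradicting the hypothesis that $rN\neq K$ for every $r\in R$. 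Since $N/K$ is a strongly 2-absorbing secondary submodule of $M/K$, the defining property (Theorem \ref{t1.5}) gives $a\,(sec(N/K))\subseteq T/K$ or $b\,(sec(N/K))\subseteq T/K$. In the first case, using $sec(N)/K\subseteq sec(N/K)$ we get $(a\,sec(N)+K)/K=a\,(sec(N)/K)\subseteq T/K$, so $a\,sec(N)\subseteq a\,sec(N)+K\subseteq T$; the second case gives $b\,sec(N)\subseteq T$ symmetrically. Thus $N$ satisfies Theorem \ref{t1.5}(c) and is a strongly 2-absorbing secondary submodule of $M$.

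I expect the only points needing care to be in the second paragraph — checking that quotienting out the (automatically contained) simple socle $K$ preserves secondness of submodules — and the realization that the hypothesis $rN\neq K$ is exactly what rules out the degenerate alternative $ab\,(N/K)=0$; the rest is routine manipulation in $M/K$.
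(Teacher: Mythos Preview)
Your proof is correct and follows essentially the same route as the paper's: pass to $M/K$, use that $N/K$ is strongly 2-absorbing secondary, invoke the inclusion $sec(N)/K\subseteq sec(N/K)$, and use the hypothesis $rN\neq K$ to dispose of the case $ab(N/K)=0$. Your version is in fact more careful than the paper's, since you explicitly justify why $K\subseteq T$ (via cocyclicity and $abN\neq 0$) and give a clean argument for the inclusion $sec(N)/K\subseteq sec(N/K)$, both of which the paper leaves implicit.
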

\begin{proof}
Let $a, b \in R$ and $H$ be a submodule of $M$ such that $abN \subseteq H$. Then $ab(N/K)\subseteq H/K$ implies that $a(sec(N/K))\subseteq H/K$ or $b(sec(N/K))\subseteq H/K$ or $ab(N/K)=0$. If $ab(N/K)=0$, then $abN=0$ because
 $rN \not =K$ for each $r \in R$. Otherwise, since $a(sec(N))/K \subseteq sec(N/K)$, we have $a(sec(N))\subseteq H$ or $b(sec(N))\subseteq H$ as required.
\end{proof}

Let $R_i$ be a commutative ring with identity and $M_i$ be an $R_i$-module, for $i = 1, 2$. Let $R = R_1 \times R_2$. Then $M = M_1 \times M_2$ is an $R$-module and each submodule of $M$ is in the form of $N = N_1 \times N_2$ for some submodules $N_1$ of $M_1$ and $N_2$ of $M_2$. In addition, $M_i$ is a comultiplication $R_i$-module, for $i = 1, 2$ if and only if $M$ is a comultiplication $R$-module by \cite[2.1]{NS14}.

\begin{lem}\label{l9.13}
Let $R = R_1 \times R_2$ and $M = M_1 \times M_2$, where $M_1$ is an $R_1$-module and $M_2$ is an $R_2$-module. If $N=N_1 \times N_2$ is a submodule of $M$, then we have the following.
\begin{itemize}
  \item [(a)] $N$ is a second submodule of $M$ if and only if $N=S_1\times 0$ or $N=S_2 \times 0$, where $S_1$ is a second submodule of $ N_1$ and $S_2$ is a second submodule of $M_2$.
  \item [(b)] $sec(N)=sec(N_1) \times sec(N_2)$.
\end{itemize}
\end{lem}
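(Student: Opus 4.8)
The plan is to deduce everything from the idempotent decomposition of $R=R_1\times R_2$. Writing $e_1=(1,0)$ and $e_2=(0,1)$, note that for a submodule $N=N_1\times N_2$ of $M$ and any $(a_1,a_2)\in R$ one has $(a_1,a_2)N=a_1N_1\times a_2N_2$; this single identity drives both parts. I would also record at the outset the trivial fact that being \emph{second} is an intrinsic property of an $R$-module, so that a second submodule of $N_i$ is the same thing as a second submodule of $M_i$ contained in $N_i$, allowing me to pass freely between the two formulations.

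For part (a), suppose first that $N=N_1\times N_2$ is a second submodule of $M$, so $N\neq 0$. Since $e_1N=N_1\times 0$ and $N$ is second, either $e_1N=N$, forcing $N_2=0$, or $e_1N=0$, forcing $N_1=0$; hence $N=N_1\times 0$ or $N=0\times N_2$. Assume $N=N_1\times 0$ (the case $N=0\times N_2$ is symmetric). For $a_1\in R_1$ we have $(a_1,0)N=a_1N_1\times 0$, and secondness of $N$ forces $a_1N_1=N_1$ or $a_1N_1=0$; as $N_1\neq 0$, this says $N_1$ is a second submodule of $M_1$. Conversely, suppose $N=S_1\times 0$ with $S_1$ a nonzero second submodule of $M_1$ (the case $N=0\times S_2$ is symmetric). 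Then $N\neq 0$, and for every $(a_1,a_2)\in R$ we have $(a_1,a_2)N=a_1S_1\times 0$, which equals $S_1\times 0=N$ or $0$ because $S_1$ is second; hence $N$ is a second submodule of $M$. The only point requiring care is to test secondness against an arbitrary element $(a_1,a_2)$ of $R$, not merely against $(a_1,0)$.

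For part (b), I would use part (a) to identify the second submodules of $M$ contained in $N=N_1\times N_2$: they are exactly the submodules $S_1\times 0$, where $S_1$ runs over the second submodules of $N_1$, together with the submodules $0\times S_2$, where $S_2$ runs over the second submodules of $N_2$. Since sums of submodules of $M_1\times M_2$ are formed componentwise, the sum of the first family is $sec(N_1)\times 0$ and the sum of the second family is $0\times sec(N_2)$, whence
$$
sec(N)=sec(N_1)\times 0+0\times sec(N_2)=sec(N_1)\times sec(N_2).
$$
It remains to check the conventions built into the definition of $sec$: if $N_1$ contains no second submodule, the first family is empty, its sum is $0$, and $sec(N_1)=0$, so the formula persists; likewise for $N_2$; and if neither $N_1$ nor $N_2$ contains a second submodule then, by (a), neither does $N$, so both sides equal $0$. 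No genuine obstacle arises — the substance is the idempotent computation in (a), and (b) is bookkeeping once (a) is available.
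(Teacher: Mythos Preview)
Your argument is correct and is precisely the natural expansion of what the paper intends: the paper's own proof of (a) is simply ``This is straightforward'' and of (b) ``This follows from part (a)'', and your idempotent computation with $e_1=(1,0)$ together with the componentwise sum in (b) is exactly the routine verification being alluded to. You have also handled the evident typos in the statement (reading $0\times S_2$ for the second alternative, and noting that ``second submodule of $N_1$'' and ``second submodule of $M_1$ contained in $N_1$'' coincide) in the right way.
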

\begin{proof}
(a) This is straightforward.

(b) This follows from part (a).
\end{proof}
\begin{thm}\label{t9.14}
Let $R = R_1 \times R_2$ and $M = M_1 \times M_2$, where $M_1$ is a
comultiplication $R_1$-module and $M_2$ is a comultiplication $R_2$-module.
Then we have the following.
\begin{itemize}
  \item [(a)] If $M_1$ be a finitely generated $R_1$-module, then a non-zero submodule $K_1$ of $M_1$ is a strongly 2-absorbing secondary submodule if and only if $N = K_1 \times 0$ is a strongly 2-absorbing secondary submodule of $M$.
  \item [(b)] If $M_2$ be a finitely generated $R_2$-module, then a non-zero submodule $K_2$ of $M_2$ is a strongly 2-absorbing secondary submodule if and only if $N =0 \times K_2$ is a strongly 2-absorbing secondary submodule of $M$.
  \item [(c)] If $K_1$ is a secondary submodule of $M_1$ and $K_2$ is a secondary submodule of $M_2$, then $N = K_1 \times K_2$ is a strongly 2-absorbing secondary submodule of $M$.
\end{itemize}
\end{thm}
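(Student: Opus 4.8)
The plan is to reduce all three parts to the behaviour of $sec(-)$ and of annihilators under the product decomposition $R=R_1\times R_2$, $M=M_1\times M_2$, using Lemma \ref{l9.13}(b), the elementary identity $Ann_{R_1\times R_2}(N_1\times N_2)=Ann_{R_1}(N_1)\times Ann_{R_2}(N_2)$, and the criterion of Theorem \ref{t1.5}(c).

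For (a) --- and (b) is verbatim symmetric --- I would argue directly with Theorem \ref{t1.5}(c). Since $K_1\neq 0$ we have $N=K_1\times 0\neq 0$, and Lemma \ref{l9.13}(b) together with $sec(0)=0$ gives $sec(N)=sec(K_1)\times 0$. For $a=(a_1,a_2)$ and $b=(b_1,b_2)$ in $R$ one computes $a\,sec(N)=a_1sec(K_1)\times 0$, $b\,sec(N)=b_1sec(K_1)\times 0$ and $abN=a_1b_1K_1\times 0$. Because $X\times 0\subseteq Y\times 0$ iff $X\subseteq Y$ and $X\times 0=0$ iff $X=0$ (for submodules $X,Y$ of $M_1$), the condition ``for all $a,b\in R$: $a\,sec(N)\subseteq abN$ or $b\,sec(N)\subseteq abN$ or $abN=0$'' is literally the same as ``for all $a_1,b_1\in R_1$: $a_1sec(K_1)\subseteq a_1b_1K_1$ or $b_1sec(K_1)\subseteq a_1b_1K_1$ or $a_1b_1K_1=0$'', the $R_2$-coordinates being irrelevant. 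By Theorem \ref{t1.5}(c) the first condition says $N$ is strongly 2-absorbing secondary over $R$ and the second says $K_1$ is strongly 2-absorbing secondary over $R_1$; hence the two are equivalent. One may instead run the argument through annihilators: $Ann_R(K_1\times 0)=Ann_{R_1}(K_1)\times R_2$ is 2-absorbing primary in $R_1\times R_2$ exactly when $Ann_{R_1}(K_1)$ is 2-absorbing primary in $R_1$ (using $\sqrt{I_1\times R_2}=\sqrt{I_1}\times R_2$); then, since $M_1\times 0$ is a finitely generated comultiplication $R$-module containing $N$, Corollary \ref{c9.11} lets one replace $M$ by $M_1\times 0$ and conclude via Theorems \ref{t9.4} and \ref{t9.5}. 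This second route is the one that uses the finite-generation and comultiplication hypotheses.

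For (c), the cleanest route is: show that $K_1\times 0$ and $0\times K_2$ are secondary submodules of the comultiplication $R$-module $M$ and then apply Theorem \ref{t9.8}(c) to $K_1\times K_2=(K_1\times 0)+(0\times K_2)$. To see $K_1\times 0$ is secondary: it is non-zero, and for $a=(a_1,a_2)\in R$ we have $a(K_1\times 0)=a_1K_1\times 0$, so either $a_1K_1=K_1$, in which case multiplication by $a$ is surjective on $K_1\times 0$, or $K_1$ secondary forces $a_1^nK_1=0$ for some $n$, whence $a^n(K_1\times 0)=0$. Equivalently, one can compute $Ann_R(K_1\times K_2)=Ann_{R_1}(K_1)\times Ann_{R_2}(K_2)=P_1\times P_2$ with $P_i$ primary (because $K_i$ is secondary), write $P_1\times P_2=(P_1\times R_2)\cap(R_1\times P_2)$, check that each of $P_1\times R_2$, $R_1\times P_2$ is a primary ideal of $R$, apply \cite[2.4]{BUY14} to obtain that $P_1\times P_2$ is 2-absorbing primary, and finish with Theorem \ref{t9.5} since $M$ is a comultiplication module.

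The argument is mostly bookkeeping; the only point that needs genuine care is pinning down $sec(-)$ and $Ann_R(-)$ on submodules of the product (supplied by Lemma \ref{l9.13} and the annihilator identity above) and, in the annihilator route for (a) and (b), observing that $M_1\times 0$ is itself a finitely generated comultiplication $R$-module so that Theorems \ref{t9.4} and \ref{t9.5} apply to it. After that, parts (a)--(c) fall out of Theorems \ref{t1.5}, \ref{t9.5}, \ref{t9.8} and Corollary \ref{c9.11}.
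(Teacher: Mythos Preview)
Your proposal is correct, and in fact offers two parallel routes where the paper uses only one. The paper proves all three parts via the annihilator characterization: it applies Theorem~\ref{t9.4} and Theorem~\ref{t9.5} together with \cite[2.23]{BUY14} (which says $I_1\times I_2$ is 2-absorbing primary in $R_1\times R_2$ iff one factor is the whole ring and the other is 2-absorbing primary, or both are primary). Your second route for (a)/(b) and your second route for (c) are essentially this same argument, with one refinement: in the converse of (a) the paper invokes Theorem~\ref{t9.4} on $M$ itself, which strictly speaking requires $M$ to be finitely generated, whereas only $M_1$ is assumed so; your insertion of Corollary~\ref{c9.11} to pass first to $M_1\times 0$ patches this.

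Your first route for (a)/(b), going directly through the criterion of Theorem~\ref{t1.5}(c) and Lemma~\ref{l9.13}(b), is genuinely different and more elementary: it avoids the detour through 2-absorbing primary ideals and \cite{BUY14} entirely, and shows that the equivalence in (a) and (b) holds without the comultiplication or finite-generation hypotheses. Likewise your first route for (c), reducing to Theorem~\ref{t9.8}(c) after checking $K_1\times 0$ and $0\times K_2$ are secondary, is a valid alternative (and uses only the comultiplication hypothesis on $M$, as does the paper's argument).
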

\begin{proof}
(a) Let $K_1$ be a strongly 2-absorbing secondary submodule of $M_1$. Then $Ann_{R_1}(K_1)$ is a 2-absorbing primary ideal of $R_1$ by Theorem \ref{t9.4}. Now since $Ann_R(N)=Ann_{R_1}(K_1) \times R_2$, we have $Ann_R(N)$ is a 2-absorbing primary ideal of $R$ by \cite[2.23]{BUY14}. Thus the result follows from Theorem \ref{t9.5}. Conversely, let $N = K_1 \times 0$ be a strongly 2-absorbing secondary submodule of $M$. Then $Ann_R(N)=Ann_{R_1}(K_1) \times R_2$ is a primary ideal of $R$ by Theorem \ref{t9.4}. Thus $Ann_{R_1}(K_1)$ is a primary ideal of $R_1$ by \cite[2.23]{BUY14}. Thus by Theorem \ref{t9.5},  $K_1$ be a strongly 2-absorbing secondary submodule of $M_1$.

(b) We have similar arguments as in part (a).

(c) Let  $K_1$ be a secondary submodule of $M_1$ and $K_2$ be a secondary submodule of $M_2$. Then $Ann_{R_1}(K_1)$ and $Ann_{R_2}(K_2)$ are primary ideals of $R_1$ and $R_2$, respectively. Now since $Ann_R(N)=Ann_{R_1}(K_1)\times Ann_{R_2}(K_2)$, we have $Ann_R(N)$ is a 2-absorbing primary ideal of $R$ by \cite[2.23]{BUY14}. Thus the result follows from Theorem \ref{t9.5}.
\end{proof}

\begin{lem}\label{t9.15}
Let $N$ be a submodule of a comultiplication $R$-module $M$. Then $N$ is a secondary module if and only if $Ann_R(N)$ be a primary ideal of $R$.
\end{lem}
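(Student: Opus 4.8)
The plan is to treat the two implications separately, observing that the comultiplication hypothesis is needed only for the ``if'' direction.

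For the ``only if'' direction, suppose $N$ is a secondary module. Then $N\neq 0$, so $Ann_R(N)\neq R$. To check that $Ann_R(N)$ is primary, take $a,b\in R$ with $ab\in Ann_R(N)$ and $b\notin\sqrt{Ann_R(N)}$. Since no power of $b$ annihilates $N$, the multiplication map $N\stackrel{b}{\to}N$ is not nilpotent, hence surjective, i.e. $bN=N$. Then $aN=a(bN)=(ab)N=0$, so $a\in Ann_R(N)$. This half is standard and uses nothing about $M$ beyond $N$ being secondary; I would either cite it or include this two-line argument.

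For the ``if'' direction, assume $Ann_R(N)$ is a primary ideal; in particular it is proper, so $N\neq 0$. Fix $a\in R$. If $a\in\sqrt{Ann_R(N)}$, then $a^{n}\in Ann_R(N)$ for some $n\geq 1$, so $a^{n}N=0$ and multiplication by $a$ on $N$ is nilpotent. If instead $a\notin\sqrt{Ann_R(N)}$, the key step is the identity $Ann_R(aN)=Ann_R(N)$: for $r\in R$ we have $r\in Ann_R(aN)\iff raN=0\iff ra\in Ann_R(N)$, and since $Ann_R(N)$ is primary and $a\notin\sqrt{Ann_R(N)}$, this forces $r\in Ann_R(N)$; the reverse inclusion is immediate. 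Because $M$ is a comultiplication module, every submodule equals the annihilator submodule of its own annihilator, so $aN=(0:_M Ann_R(aN))=(0:_M Ann_R(N))=N$, i.e. multiplication by $a$ is surjective. Thus for every $a\in R$ the map $N\stackrel{a}{\to}N$ is surjective or nilpotent, so $N$ is secondary.

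The only real content is the equality $Ann_R(aN)=Ann_R(N)$, which is precisely where the primary hypothesis is used; the comultiplication hypothesis then upgrades ``same annihilator'' to ``same submodule''. I do not expect a genuine obstacle here beyond being careful with the degenerate case $N=0$ and invoking the comultiplication property in the form $N=(0:_M Ann_R(N))$.
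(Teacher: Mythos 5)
Your proof is correct and follows essentially the same route as the paper: the sufficiency direction in both cases reduces multiplication by $a$ to an annihilator computation (the paper writes $aN=(0:_MI)$ and shows $I\subseteq Ann_R(N)$ via primariness; you equivalently show $Ann_R(aN)=Ann_R(N)$) and then invokes the comultiplication property to conclude $aN=N$. You additionally spell out the necessity direction, which the paper dismisses as clear.
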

\begin{proof}
The necessity is clear. For converse, let $r \in R$. As $M$ is a comultiplication module, $rN=(0:_MI)$ for some ideal $I$ of $R$. Now $rI \subseteq Ann_R(N)$ implies that $I \subseteq Ann_R(N)$ or $r^t\in Ann_R(N)$ for some positive integer $t$. Thus as $M$ is a comultiplication $R$-module, $N=rN$ or $r^tN=0$ for some positive integer $t$.
\end{proof}

\begin{thm}\label{t9.16}
Let $R = R_1 \times R_2$ be a decomposable ring and $M = M_1 \times M_2$
be a finitely generated comultiplication $R$-module, where $M_1$ is an $R_1$-module and $M_2$ is an $R_2$-module. Suppose that $N = N_1 \times N_2$ is a non-zero submodule of $M$. Then the following conditions are equivalent:
\begin{itemize}
  \item [(a)] $N$ is a strongly 2-absorbing secondary submodule of $M$;
  \item [(b)] Either $N_1 = 0$ and $N_2$ is a strongly 2-absorbing secondary submodule of $M_2$ or $N_2 = 0$ and $N_1$ is a strongly 2-absorbing secondary submodule of $M_1$ or $N_1$, $N_2$ are secondary submodules of $M_1$, $M_2$, respectively.
\end{itemize}
\end{thm}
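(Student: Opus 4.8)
The plan is to establish the two implications separately, relying on the product results already proved. A preliminary observation used in both directions: since $M = M_1 \times M_2$ is a finitely generated comultiplication $R$-module with $R = R_1 \times R_2$, each $M_i$ is a finitely generated comultiplication $R_i$-module (finite generation descends by projecting a finite generating set of $M$ onto its factors, and the comultiplication property descends by \cite[2.1]{NS14}).

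For $(b) \Rightarrow (a)$ I would treat the three alternatives in (b) one at a time. If $N_1 = 0$ and $N_2$ is a strongly 2-absorbing secondary submodule of $M_2$, then Theorem \ref{t9.14}(b) gives that $N = 0 \times N_2$ is strongly 2-absorbing secondary in $M$; the case $N_2 = 0$ is handled symmetrically by Theorem \ref{t9.14}(a); and if $N_1$ and $N_2$ are secondary submodules of $M_1$ and $M_2$, then Theorem \ref{t9.14}(c) gives the conclusion directly.

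For $(a) \Rightarrow (b)$, assume $N = N_1 \times N_2$ is a strongly 2-absorbing secondary submodule of $M$; since $N \neq 0$, at least one factor is non-zero. If one factor is zero, say $N_1 = 0$, then $N_2 \neq 0$ and the ``only if'' half of Theorem \ref{t9.14}(b) shows $N_2$ is strongly 2-absorbing secondary in $M_2$, which is the first alternative in (b) (the case $N_2 = 0$ is symmetric). So suppose $N_1 \neq 0 \neq N_2$. By Theorem \ref{t9.4}, $Ann_R(N)$ is a 2-absorbing primary ideal of $R$; writing $Ann_R(N) = Ann_{R_1}(N_1) \times Ann_{R_2}(N_2)$ and noting that $N_i \neq 0$ forces $Ann_{R_i}(N_i) \neq R_i$, the classification of 2-absorbing primary ideals of a product of two rings \cite[2.23]{BUY14} forces both $Ann_{R_1}(N_1)$ and $Ann_{R_2}(N_2)$ to be primary ideals. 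Then Lemma \ref{t9.15}, applied to the comultiplication modules $M_1$ and $M_2$, upgrades this to: $N_1$ and $N_2$ are secondary submodules of $M_1$ and $M_2$, which is the third alternative in (b).

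The calculations here are all routine; the one point needing attention is the invocation of \cite[2.23]{BUY14}, where one must be sure the component annihilators are proper ideals — exactly the role played by the hypothesis $N_1 \neq 0 \neq N_2$ — so that the conclusion is ``both components primary'' rather than one of the degenerate branches in which a component equals its whole ring. I do not anticipate any obstacle beyond this bookkeeping and the verification that the finite generation and comultiplication hypotheses pass to the two factors.
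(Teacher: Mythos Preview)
Your proposal is correct and follows essentially the same route as the paper: both directions rest on Theorem~\ref{t9.14} for $(b)\Rightarrow(a)$ and on Theorem~\ref{t9.4} together with the classification \cite[2.23]{BUY14} and Lemma~\ref{t9.15} for $(a)\Rightarrow(b)$. The only cosmetic difference is that for $(a)\Rightarrow(b)$ the paper lets \cite[2.23]{BUY14} produce all three branches at once (translating the first two back via Theorem~\ref{t9.5}), whereas you first peel off the one-factor-zero cases using the converse in Theorem~\ref{t9.14}(a),(b) and reserve the annihilator argument for the case $N_1\neq 0\neq N_2$.
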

\begin{proof}
$(a) \Rightarrow (b)$.
Let $N = N_1 \times N_2$ be a strongly 2-absorbing secondary submodule
of $M$. Then $Ann_R(N)= Ann_{R_1}(N_1)\times Ann_{R_2}(N_2)$ is a 2-absorbing
primary ideal of $R$ by Theorem \ref{t9.4}. By \cite[2.23]{BUY14},
we have $Ann_{R_1}(N_1)=R_1$ and $Ann_{R_2}(N_2)$ is a 2-absorbing primary ideal of
$R_2$ or $Ann_{R_2}(N_2)=R_2$ and $Ann_{R_1}(N_1)$ is a 2-absorbing primary ideal of $R_1$  or $Ann_{R_1}(N_1)$ and $Ann_{R_2}(N_2)$ are primary ideals of $R_1$ and $R_2$, respectively. Suppose that $Ann_{R_1}(N_1)=R_1$ and $Ann_{R_2}(N_2)$ is a 2-absorbing primary ideal of $R_2$. Then $N_1=0$ and $N_2$ is a strongly 2-absorbing secondary submodule of $M_2$ by Theorem \ref{t9.5}. Similarly, if $Ann_{R_2}(N_2)=R_2$ and $Ann_{R_1}(N_1)$ is a 2-absorbing primary ideal of $R_1$, then $N_2=0$ and $N_1$ is a strongly 2-absorbing secondary submodule of $M_1$. If the last case hold, then as $M_1$ (resp. $M_2$) is a comultiplication $R_1$ (resp.  $R_2$) module, $N_1$ (resp. $N_2$) is a secondary submodule of $M_1$ (resp. $M_2$) by Lemma \ref{t9.15}.

$(b) \Rightarrow (a)$.
This follows from Theorem \ref{t9.15}.
\end{proof}

\bibliographystyle{amsplain}

\end{document}